\providecommand{\tabularnewline}{\\}
\theoremstyle{plain}
\newtheorem{thm}{\protect\theoremname}[section]
  \theoremstyle{definition}
  \newtheorem{defn}[thm]{\protect\definitionname}
  \theoremstyle{definition}
  \newtheorem{example}[thm]{\protect\examplename}
  \theoremstyle{plain}
  \newtheorem{assumption}[thm]{\protect\assumptionname}
  \theoremstyle{plain}
  \newtheorem{lem}[thm]{\protect\lemmaname}
  \theoremstyle{remark}
  \newtheorem{rem}[thm]{\protect\remarkname}
  \theoremstyle{plain}
  \newtheorem{prop}[thm]{\protect\propositionname}
  \theoremstyle{plain}
  \newtheorem{cor}[thm]{\protect\corollaryname}
\newcommand{\lip}{\mbox{\rm lip}}
\DeclareMathOperator*{\slimsup}{LIMSUP}
\newcommand{\cls}{\mbox{\rm{cl-sets}}_{\neq \emptyset}}
\newcommand{\gph}{\mbox{\rm gph}}
\newcommand{\cl}{\mbox{\rm cl}}
\newcommand{\dom}{\mbox{\rm dom}}
\newcommand{\co}{\mbox{\rm co}}
\numberwithin{equation}{section}
\numberwithin{figure}{section}
\title[Characterizing generalized derivatives of set-valued maps]{Characterizing generalized derivatives of set-valued maps: Extending the tangential and normal approaches}
  \providecommand{\assumptionname}{Assumption}
  \providecommand{\corollaryname}{Corollary}
  \providecommand{\definitionname}{Definition}
  \providecommand{\examplename}{Example}
  \providecommand{\lemmaname}{Lemma}
  \providecommand{\propositionname}{Proposition}
  \providecommand{\remarkname}{Remark}
\providecommand{\theoremname}{Theorem}
\begin{document}

\author{C.H. Jeffrey Pang}

\curraddr{Department of Mathematics, National University of Singapore, Block
S17 05-10, 10 Lower Kent Ridge Road, Singapore 119076 }

\email{matpchj@nus.edu.sg}

\keywords{Aubin criterion, Mordukhovich criterion, tangent cones, multifunctions,
Aubin property, generalized derivatives, normal cones, coderivatives.}

\date{\today}
\begin{abstract}
For a set-valued map, we characterize, in terms of its (unconvexified
or convexified) graphical derivatives near the point of interest,
positively homogeneous maps that are generalized derivatives in the
sense of \cite{set_diff}. This result generalizes the Aubin criterion
in \cite{DQZ06}. A second characterization of these generalized derivatives
is easier to check in practice, especially in the finite dimensional
case. Finally, the third characterization in terms of limiting normal
cones and coderivatives generalizes the Mordukhovich criterion in
the finite dimensional case. The convexified coderivative has a bijective
relationship with the set of possible generalized derivatives. We
conclude by illustrating a few applications of our result.

\tableofcontents{}
\end{abstract}
\maketitle

\section{Introduction}

We say that $S$ is a \emph{set-valued map }or a \emph{multifunction},
denoted by $S:X\rightrightarrows Y$, if $S(x)\subset Y$ for all
$x\in X$. There are many examples of set-valued maps in optimization
and related areas. For example, the generalized derivatives of a nonsmooth
function, the feasible set of a parametric optimization problem, and
the set of optimizers to a parametric optimization problem may be
profitably viewed as set-valued maps. 

The Lipschitz analysis of a set-valued map (more precisely, the Aubin
property) is equivalent to the metric regularity of its inverse. Metric
regularity is in turn used to derive stability conditions for nonsmooth
problems, and thus identify when a problem is ill conditioned and
cannot be easily resolved by any numerical method. One can identify
metric regularity from graphical derivatives or coderivatives. We
will discuss these criteria later in more detail.

We now illustrate how the Lipschitz analysis of set-valued maps can
be helpful in the analysis of optimization problems. Consider the
problem $P(u,v)$ defined by 
\begin{equation}
P(u,v):\inf_{x\in S(u)}v^{T}x,\label{eq:first-example}
\end{equation}
where $S:U\rightrightarrows X$ is a set-valued map. A profitable
way of analyzing $P(u,v)$ is by studying the set-valued map $S$.
The Lipschitz continuity of $P(\cdot,\bar{v})$ is established if
$S$ is Lipschitz in the Pompieu-Hausdorff distance, which can be
easily checked when $S$ has a closed convex graph through the Robinson-Ursescu
Theorem. See for example \cite{Cla83,DR09}.

It is natural to ask whether a first order analysis of set-valued
maps can be a more effective tool in the analysis of optimization
and equilibrium problems than a Lipschitz analysis, but we need to
first build the basic tools. This paper studies how a first order
analysis of a set-valued map may be obtained from the tangent and
normal cones of its graph, generalizing the Aubin and Mordukhovich
criteria. We will apply our results to study the set-valued map of
feasible points satisfying a set of equalities and inequalities in
Proposition \ref{pro:RW-E9.44}.

The Aubin criterion as presented in \cite{DQZ06} characterizes the
Lipschitz properties of a set-valued map $S:X\rightrightarrows Y$
using the tangent cones of its graph $\gph(S)$. Here, the \emph{graph
}$\gph(S)$ is the set $\{(x,y)\mid y\in S(x)\}\subset X\times Y$.
One contribution of this paper is to characterize the generalized
derivatives, introduced in \cite{set_diff}, of the set-valued map
$S$ in terms of the tangent cones of its graph. It is usually easier
to obtain information on the tangent cones of $\gph(S)$ rather than
the generalized derivatives, so our result will play the role the
Mordukhovich criterion and the Aubin criterion currently have in Lipschitz
analysis. We now recall some standard definitions necessary to proceed.
The closed ball with center $\bar{x}$ and radius $\epsilon$ is denoted
by $\mathbb{B}_{\epsilon}(\bar{x})$, and $\mathbb{B}_{1}(0)$ is
written simply as $\mathbb{B}$.
\begin{defn}
(Positive homogeneity) Let $X$ and $Y$ be linear spaces. A set-valued
map $H:X\rightrightarrows Y$ is \emph{positively homogeneous} if
\[
0\in H(\mathbf{0})\mbox{, and }H(kw)=kH(w)\mbox{ for all }k>0\mbox{ and }w\in X.
\]

\end{defn}
A positively homogeneous map is also called a \emph{process}. The
positively homogeneous map $(H+\delta):X\rightrightarrows Y$, where
$H:X\rightrightarrows Y$ is positively homogeneous and $\delta>0$
is a real number, is defined by
\[
(H+\delta)(w):=H(w)+\delta\|w\|\mathbb{B}.
\]
Here is the definition of generalized differentiability of set-valued
maps introduced in \cite{set_diff}.
\begin{defn}
\label{def:T-diff}\cite{set_diff} (Generalized differentiability)
Let $X$ and $Y$ be normed linear spaces. Let $S:X\rightrightarrows Y$
be such that $(\bar{x},\bar{y})\in\gph(S)$, and let $H:X\rightrightarrows Y$
be positively homogeneous. The map $S$ is \emph{pseudo strictly $H$-differentiable
at $(\bar{x},\bar{y})$ }if for any $\delta>0$, there are neighborhoods
$U_{\delta}$ of $\bar{x}$ and $V_{\delta}$ of $\bar{y}$ such that
\[
S(x)\cap V_{\delta}\subset S(x^{\prime})+(H+\delta)(x-x^{\prime})\mbox{ for all }x,x^{\prime}\in U_{\delta}.
\]
If $S$ is pseudo strictly $H$-differentiable for some $H$ defined
by $H(w)=\kappa\|w\|\mathbb{B}$, where $\kappa\geq0$, then $S$
satisfies the \emph{Aubin property}, also referred to as the \emph{pseudo-Lipschitz
}property. The \emph{Lipschitz modulus} (or \emph{graphical modulus})
is the infimum of all such $\kappa$, and is denoted by $\lip\, S(\bar{x}\mid\bar{y})$.
\end{defn}
We had used $T:X\rightrightarrows Y$ to denote the positively homogeneous
map in \cite{set_diff}, but we now use $H$ to denote the positively
homogeneous map instead. We reserve $T$ to denote the tangent cone,
defined as follows.
\begin{defn}
(Tangent cones) Let $X$ be a normed linear space. A vector $w\in X$
is \emph{tangent }to a set $C\subset X$ at a point $\bar{x}\in C$,
written $w\in T_{C}(\bar{x})$, if 
\[
\frac{x_{i}-\bar{x}}{\tau_{i}}\to w\mbox{ for some }x_{i}\to\bar{x},\, x_{i}\in C,\,\tau_{i}\searrow0.
\]
The set $T_{C}(\bar{x})$ is referred to as the \emph{tangent cone
}(also called the \emph{contingent cone}) to $C$ at $\bar{x}$. 
\end{defn}
We say that $S:X\rightrightarrows Y$ is \emph{locally closed }at
$(\bar{x},\bar{y})\in\gph(S)$ if $\gph(S)\cap\mathbb{B}_{\epsilon}\big((\bar{x},\bar{y})\big)$
is a closed set for some $\epsilon>0$. 
\begin{defn}
(Graphical derivative) Let $X$ and $Y$ be normed linear spaces.
For a set-valued map $S:X\rightrightarrows Y$ such that  $(\bar{x},\bar{y})\in\gph(S)$,
the \emph{graphical derivative}, also known as the \emph{contingent
derivative}, is denoted by $DS(\bar{x}\mid\bar{y}):X\rightrightarrows Y$
and defined by
\[
\gph\big(DS(\bar{x}\mid\bar{y})\big)=T_{\scriptsize\gph(S)}(\bar{x},\bar{y}).
\]
The \emph{convexified graphical derivative }is denoted by $D^{\star\star}S(x\mid y):X\rightrightarrows Y$
and is defined by 
\[
\gph\big(D^{\star\star}S(x\mid y)\big)=\cl\,\co\, T_{\scriptsize\gph(S)}(x,y),
\]
i.e., the closed convex hull of $T_{\scriptsize\gph(S)}(x,y)$. 
\end{defn}
The study of the relationship between the graphical derivative and
the graphical modulus can be traced back to the papers of Aubin and
his co-authors \cite{Aub81,AF87}, \cite[Theorem 7.5.4]{AE84} and
\cite[Theorem 5.4.3]{AF90}. The main result in \cite{DQZ06} characterizes
$\lip\, S(\bar{x}\mid\bar{y})$ in terms of the graphical derivatives,
and was named the Aubin criterion to recognize the efforts of Aubin
and his coauthors. See also \cite{Fran_Quin10}. Their result will
be stated as Theorem \ref{thm:DQZ-Aubin}. Their proof was motivated
by the proof of \cite[Theorem 3.2.4]{Aub91} due to Frankowska. Another
paper of interest on the Aubin criterion is \cite{Aub06}, where a
proof of part of the result in \cite{DQZ06} was obtained using viability
theory. 

In Asplund spaces, a different characterization of $\lip\, S(\bar{x}\mid\bar{y})$
can be obtained in terms of (limiting) coderivatives. Coderivatives
are defined in terms of the (limiting) normals of $\gph(S)$ at $(\bar{x},\bar{y})$,
so this approach can be considered as the dual approach to the Aubin
criterion. This characterization known as the Mordukhovich criterion
in \cite{RW98}. We refer to \cite{Aub06,Mor93,Mor06,RW98} for more
on the history of this result, where the contributions of Ioffe are
also highlighted. The Mordukhovich criterion has been frequently applied
to analyze many problems in nonsmooth optimization, feasibility and
equilibria. Quoting \cite{DQZ06}, we note that when $X$ is any Banach
space and $Y$ is finite dimensional, a necessary and sufficient condition
for $S:X\rightrightarrows Y$ to have the Aubin property is given
in terms of the Ioffe approximate coderivative in \cite{JT99}. We
also show how our result generalizes the Mordukhovich criterion in
the finite dimensional case, and that the convexified limiting coderivative
has a bijective relationship with the set of possible generalized
derivatives. 

The original context of the Aubin criterion was metric regularity,
while the original context of the Mordukhovich criterion was linear
openness. Metric regularity gives a description of solutions sets
to nonsmooth problems, which is one of the themes of the recent books
\cite{DR09,KK02}. Further references of metric regularity and linear
openness are \cite{Iof00,Mor06,RW98}. Other related works include
\cite{Aze06,IofSch96,Yen_Yao_Kien08}. The equivalence between the
Aubin property, metric regularity and linear openness is well known.
For readers interested in applying the results in this paper in the
context of metric regularity or linear openness, we refer to \cite[Section 7]{set_diff},
where a similar equivalence for generalized differentiability of set-valued
maps, generalized metric regularity, and generalized linear openness
is obtained.

\subsection{Contributions of this paper}

We present three sets of theorems to characterize the generalized
derivatives of a set-valued map using the tangent and normal cones.
The first set of theorems are presented in Section \ref{sec:first-char}.
Theorem \ref{thm:Aubin-crit} extends the Aubin criterion in the sense
of generalized derivatives in Definition \ref{def:T-diff}, and has
a simple proof. 

We present a second characterization of the generalized derivatives
in Section \ref{sec:second} that is easier to check in practice.
The proofs of this set of theorems depend on the first characterization
in Section \ref{sec:first-char}. More specifically, consider $S:X\rightrightarrows Y$
locally closed at $(\bar{x},\bar{y})\in\gph(S)$ and a positively
homogeneous map $H:X\rightrightarrows Y$. Consider also $\{G_{i}\}_{i\in I}$,
where $G_{i}:X\rightrightarrows Y$ are positively homogeneous and
$I$ is some index set. We impose further conditions so that $S$
is pseudo strictly $H$-differentiable at $(\bar{x},\bar{y})$ if
and only if 
\[
G_{i}(p)\cap[-H(-p)]\neq\emptyset\mbox{ for all }i\in I\mbox{ and }p\in X\backslash\{0\}.
\]
These conditions are easier to check in the finite dimensional case,
and the Clarke regular case leads to further simplifications.

Finally, a third characterization is expressed in terms of the limiting
normal cones for the finite dimensional case in Theorem \ref{thm:boris-crit},
generalizing the Mordukhovich criterion. This characterization depends
on the second characterization in Section \ref{sec:second}. The convexified
coderivative will be shown to have a bijective relationship with the
set of possible generalized derivatives in Theorem \ref{thm:co-coderv-best}.

We apply the results above in Proposition \ref{pro:RW-E9.44} to study
the generalized differentiability properties of constraint systems,
to study generalized metric regularity and linear openness, and to
estimate the convexified limiting coderivative of a set-valued map
defined as a limit of set-valued maps.

\subsection{Preliminaries and notation}

We recall other definitions in set-valued analysis needed for the
rest of this paper. We say that $S$ is \emph{closed-valued }if $S(x)$
is closed for all $x\in X$, and the definitions for compact-valuedness
and convex-valuedness are similar. For set-valued maps $S_{1}:X\rightrightarrows Y$
and $S_{2}:X\rightrightarrows Y$, we use $S_{1}\subset S_{2}$ to
denote $S_{1}(x)\subset S_{2}(x)$ for all $x\in X$, which also corresponds
to $\gph(S_{1})\subset\gph(S_{2})$.

The outer and inner norms of positively homogeneous maps will be needed
later.
\begin{defn}
(Outer and inner norms) The \emph{outer norm }$\|H\|^{+}$ and \emph{inner
norm }$\|H\|^{-}$ of a positively homogeneous map $H:X\rightrightarrows Y$
are defined by 
\begin{eqnarray*}
\|H\|^{+} & := & \sup_{\|w\|\leq1}\sup_{z\in H(w)}\|z\|\\
\mbox{and }\|H\|^{-} & := & \sup_{\|w\|\leq1}\inf_{z\in H(w)}\|z\|.
\end{eqnarray*}

\end{defn}
The positively homogeneous maps defined as fans and prefans in \cite{Iof81}
will be used frequently in the rest of this paper, and we recall their
definitions below.
\begin{defn}
\cite{Iof81} (Fans and prefans) We say that $H:\mathbb{R}^{n}\rightrightarrows\mathbb{R}^{m}$
is a \emph{prefan} if 
\begin{enumerate}
\item $H(p)$ is nonempty, convex and compact for all $p\in\mathbb{R}^{n}$. 
\item $H$ is positively homogeneous, and
\item $\|H\|^{+}$ is finite.
\end{enumerate}
In particular, $H(0)=\{0\}$. If in addition, $H(p_{1}+p_{2})\subset H(p_{1})+H(p_{2})$
for all $p_{1},p_{2}\in\mathbb{R}^{n}$, then we say that $H$ is
a \emph{fan}.
\end{defn}
It seems that prefans are not as commonly used as fans. But our characterizations
in Sections \ref{sec:second} and \ref{sec:third-char} are stated
using prefans, and we shall not use fans in this paper. Example \ref{exa:prefan}
below may help understand why prefans are more suitable.
\begin{example}
\label{exa:prefan}(Prefans over fans) This example shows a prefan
$H:\mathbb{R}\rightrightarrows\mathbb{R}$ that is not a fan such
that $S$ is pseudo strictly $H$-differentiable at $(0,0)$. Consider
$S:\mathbb{R}\rightrightarrows\mathbb{R}$ and $H:\mathbb{R}\rightrightarrows\mathbb{R}$
defined by 
\[
S(x):=(-\infty,x]\qquad\mbox{and }H(x):=\max\{0,x\}.
\]
The set-valued map $S$ is pseudo strictly $H$-differentiable at
$(0,0)$, as can be easily checked from definitions or by applying
Corollary \ref{cor:regular-fd-case} later. 
\end{example}
We recall one possible definition of inner and outer semicontinuity.
Note that the definition of inner semicontinuity may not be standard
when $X$ and $Y$ are infinite dimensional. For example, the definition
here already assumes that $X$ and $Y$ are metrizable, while the
definition in \cite{AF90} does not assume metrizability. This definition
of inner semicontinuity will be used in Definition \ref{def:gen-isc}.
\begin{defn}
(Inner and outer semicontinuity) For a closed-valued mapping $S:C\rightrightarrows Y$
and a point $\bar{x}\in C\subset X$, $S$ is \emph{inner semicontinuous
}(written \emph{isc}) \emph{with respect to $C$ at $\bar{x}$} if
for every $\rho>0$ and $\epsilon>0$, there exists a neighborhood
$V$ of $\bar{x}$ such that 
\[
S(\bar{x})\cap\rho\mathbb{B}\subset S(x)+\epsilon\mathbb{B}\mbox{ for all }x\in C\cap V.
\]
We say that $S$ is \emph{outer semicontinuous} (written \emph{osc})
\emph{with respect to $C$ at $\bar{x}$} if for every $\rho>0$ and
$\epsilon>0$, there exists a neighborhood $V$ of $\bar{x}$ such
that 
\[
S(x)\cap\rho\mathbb{B}\subset S(\bar{x})+\epsilon\mathbb{B}\mbox{ for all }x\in C\cap V.
\]

\end{defn}
Following the notation in \cite{RW98}, we say that a closed set $C\subset\mathbb{R}^{n}$
is \emph{Clarke regular }at $\bar{x}\in C$ if the tangent map $T_{C}:C\rightrightarrows\mathbb{R}^{n}$
is inner semicontinuous at $\bar{x}$. (This is equivalent to the
usual definition of Clarke regularity of a set in a finite dimensional
space through \cite[Theorem 6.26 and Corollary 6.29(b)]{RW98}.) We
shall only look at Clarke regularity of sets in finite dimensions,
in part because our definition of inner semicontinuity is nonstandard
in infinite dimensions. We say that $S$ is \emph{graphically regular}
at $(\bar{x},\bar{y})\in\gph(S)$ if $\gph(S)$ is Clarke regular
at $(\bar{x},\bar{y})$. 

We recall the definition of the outer limit of sets. For $\{x_{i}\}_{i=1}^{\infty}\subset X$,
$\bar{x}\in X$ and $C\subset X$, the notation $x_{i}\xrightarrow[C]{}\bar{x}$
means $x_{i}\in C$ for all $i$ and $x_{i}\to\bar{x}$.
\begin{defn}
(Outer limits) Let $C\subset X$. For a set-valued map $S:C\rightrightarrows Y$,
the \emph{outer limit} of $S$ at $\bar{x}\in C$, is defined by 
\[
\limsup_{x\xrightarrow[C]{}\bar{x}}S(x):=\{y\mid\mbox{there exists }x_{i}\xrightarrow[C]{}\bar{x},\, y_{i}\in S(x_{i})\mbox{ s.t. }y_{i}\to y\}.
\]

\end{defn}
Lastly, for $K\subset X$, the \emph{negative polar cone} of $K$
is denoted by $K^{0}$, and is defined by $K^{0}=\{v\mid\left\langle v,x\right\rangle \leq0\mbox{ for all }x\in K\}$.

\section{\label{sec:first-char}A first characterization: Extending the tangential
approach}

The main result of this section is Theorem \ref{thm:Aubin-crit},
where we generalize the Aubin criterion. We also mention that Lemma
\ref{lem:creeping} will be used for much of the paper later.

We list assumptions that will be used often in the rest of the paper.
\begin{assumption}
\label{ass:common}Let $X$ and $Y$ be normed linear spaces, and
assume further that $Y$ is complete (i.e, $Y$ is a Banach space).
Let $S:X\rightrightarrows Y$ be locally closed at $(\bar{x},\bar{y})\in\gph(S)$,
and let $H:X\rightrightarrows Y$ be positively homogeneous.
\end{assumption}
The following is our first characterization of the generalized derivatives
of $S$.
\begin{thm}
\label{thm:Aubin-crit}(Generalized Aubin criterion) Suppose Assumption
\ref{ass:common} holds. Consider the statements:
\begin{enumerate}
\item [(1)]$S$ is pseudo strictly $H$-differentiable at $(\bar{x},\bar{y})$.
\item [(2)]For all $\delta>0$, there are neighborhoods $U$ of $\bar{x}$
and $V$ of $\bar{y}$ such that 
\begin{flalign}
DS(x\mid y)(p)\cap[-(H+\delta)(-p)]\neq\emptyset\label{eq:aubin-crit}\\
\mbox{ for all }(x,y)\in\gph(S) & \cap[U\times V]\mbox{ and }p\in X\backslash\{0\}.\nonumber 
\end{flalign}

\item [(2$^{\prime}$)]For all $\delta>0$, there are neighborhoods $U$
of $\bar{x}$ and $V$ of $\bar{y}$ such that 
\begin{flalign}
D^{\star\star}S(x\mid y)(p)\cap[-(H+\delta)(-p)]\neq\emptyset\label{eq:aubin-convex-crit}\\
\mbox{ for all }(x,y)\in\gph(S) & \cap[U\times V]\mbox{ and }p\in X\backslash\{0\}.\nonumber 
\end{flalign}

\end{enumerate}
We then have the following:
\begin{enumerate}
\item [(a)]If $Y$ is finite dimensional and $H$ is compact-valued, then
$(1)$ implies $(2)$. 
\item [(b)]If $\|H\|^{+}<\infty$, $H$ is convex-valued and $X$ is complete,
then $(2)$ implies $(1)$. 
\item [(c)]If both $X$ and $Y$ are finite dimensional Euclidean spaces
and $H$ is a prefan, then $(1)$, $(2)$ and $(2^{\prime})$ are
equivalent.
\end{enumerate}
\end{thm}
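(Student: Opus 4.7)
The plan is to argue (a), (b), (c) in turn, with Lemma~\ref{lem:creeping} as the main technical engine for passing from a pointwise tangent condition back to the strict inclusion required by Definition~\ref{def:T-diff}.

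For (a), I would unwind the definition of pseudo strict $H$-differentiability with tolerance $\delta/2$ to obtain neighborhoods $U_0,V_0$ on which $S(x)\cap V_0\subset S(x')+(H+\delta/2)(x-x')$. Shrinking to $U\subset U_0$, $V\subset V_0$ so that $x+\tau p\in U_0$ for sufficiently small $\tau>0$, fix $(x,y)\in\gph(S)\cap(U\times V)$ and $p\neq 0$. Applying the inclusion with $x'=x+\tau_i p$ and $\tau_i\searrow 0$, and using positive homogeneity of $(H+\delta/2)$, I write $y=y_i+\tau_i z_i$ with $y_i\in S(x+\tau_i p)$ and $z_i\in H(-p)+(\delta/2)\|p\|\mathbb{B}$. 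Compact-valuedness of $H$ together with finite-dimensionality of $Y$ produces a subsequence with $z_i\to\bar z$, so $(y_i-y)/\tau_i\to -\bar z$ exhibits $-\bar z\in DS(x\mid y)(p)\cap -(H+\delta/2)(-p)\subset DS(x\mid y)(p)\cap -(H+\delta)(-p)$, giving (2).

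For (b), apply (2) with $\delta/2$ to obtain $U,V$. For $x,x'$ in a smaller neighborhood and $y\in S(x)\cap V'$, set $p:=x'-x$ and run the creeping argument of Lemma~\ref{lem:creeping} along the constant direction $p$. At each intermediate point $(x_k,y_k)\in\gph(S)$ on the line segment from $x$ to $x'$, condition (2) supplies $(p,q_k)\in T_{\gph(S)}(x_k,y_k)$ with $q_k\in -H(-p)-(\delta/2)\|p\|\mathbb{B}$, and a short step of length $\tau_k$ in this direction stays in $\gph(S)$ by definition of $DS$. Finiteness of $\|H\|^+$ bounds the $y$-step sizes so the trajectory remains inside $U\times V$, and local closedness of $\gph(S)$ together with completeness of $Y$ yields a limit $y'\in S(x')$. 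The total $y$-displacement $y'-y$ is an average of the $q_k$'s, which by convex-valuedness of $H(-p)$ lies in the convex set $-H(-p)-(\delta/2)\|p\|\mathbb{B}$; hence $y-y'\in H(-p)+(\delta/2)\|p\|\mathbb{B}\subset (H+\delta)(x-x')$, establishing (1).

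For (c), $(2)\Rightarrow(2')$ is immediate from the pointwise inclusion $DS(x\mid y)(p)\subset D^{\star\star}S(x\mid y)(p)$, and since a prefan is compact- and convex-valued with $\|H\|^+<\infty$, parts (a) and (b) already yield $(1)\Leftrightarrow(2)$. To close the loop I would prove $(2')\Rightarrow(1)$ by repeating the creeping argument of (b) with the following modification: the vector $q_k\in D^{\star\star}S(x_k\mid y_k)(p)\cap -(H+\delta/2)(-p)$ need not be a genuine tangent, so I approximate $(p,q_k)\in\cl\co T_{\gph(S)}(x_k,y_k)$ by a finite convex combination $\sum_j\lambda_j(p^{(j)},q^{(j)})$ of genuine tangents (available by Carath\'eodory in finite dimensions) and replace the single step of size $\tau_k$ by a concatenation of sub-steps of sizes $\lambda_j\tau_k$ in the directions $(p^{(j)},q^{(j)})$; the $x$-displacements average to $\tau_k p$, and convex-valuedness of $H(-p)$ keeps the accumulated $y$-displacement in the same convex set $-H(-p)-(\delta/2)\|p\|\mathbb{B}$. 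The main obstacle throughout is controlling error accumulation: keeping the trajectory inside the neighborhoods where (2) or (2') is known, and ensuring the final $y$-displacement satisfies the sharp $(H+\delta)$-bound rather than just $(H+\delta/2)$ after the $o(\tau_k)$ perturbations at each step. Both issues are absorbed by the $\delta/2$-slack together with the convex-valuedness of $H(-p)$, with the prefan hypothesis in (c) ensuring that finitely many tangent directions suffice for the convex approximation.
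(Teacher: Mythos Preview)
Your treatments of (a) and (b) are essentially the paper's: (a) is the same cluster-point argument, and (b) is exactly ``verify the hypothesis \eqref{eq:Aubin-compare} of Lemma~\ref{lem:creeping} with $p'=p$''. The $\delta/2$ slack in (a) is harmless but unnecessary, since $-(H+\delta)(-p)$ is already closed.

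The gap is in (c), specifically your route $(2')\Rightarrow(1)$ via a Carath\'eodory decomposition of $(p,q_k)\in\cl\,\co\,T_{\gph(S)}(x_k,y_k)$ into genuine tangents $(p^{(j)},q^{(j)})$ followed by concatenated sub-steps. The directions $(p^{(2)},q^{(2)}),\dots$ are tangent at $(x_k,y_k)$, not at the point reached after the first sub-step; nothing in $(2')$ guarantees a tangent close to $(p^{(2)},q^{(2)})$ at that new point. Moreover, even when $(p,q_k)$ is bounded, the individual summands $(p^{(j)},q^{(j)})$ in a conic Carath\'eodory representation need not be (they can partially cancel), so the intermediate sub-points can leave $U\times V$ and the $y$-excursions are not controlled by $H(-p)$, only their \emph{average} is. Convex-valuedness of $H(-p)$ does not help here because the individual $q^{(j)}$ are not in $-(H+\delta/2)(-p)$. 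In short, a convexified tangent cannot in general be realized as a broken path of genuine tangent steps.

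The paper takes a completely different route: it proves $(2')\Rightarrow(2)$ directly (then invokes (a),(b)) by a projection argument due to Dontchev--Quincampoix--Zlateva. One fixes $(x,y)$ and $p$, lets $(p^*,q^*)$ be the nearest point of $\gph\big(DS(x\mid y)\big)$ to the compact convex set $\{p\}\times[-(H+\delta)(-p)]$, and shows $(p^*,q^*)$ actually lies in that set. The key step projects the midpoint $\tfrac12(p^*+p,\,q^*+w)$ (scaled by $t_n$) onto $\gph(S)$; the resulting normal inequality, evaluated against a vector supplied by $(2')$ at the projected point and passed to the limit, forces $\|(p,w)-(p^*,q^*)\|^2\le 0$. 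This argument uses convexified tangents only through their polar (normal) inequality, never by trying to walk along them, which is why it avoids the obstruction your approach runs into.
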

We begin with the proof of Theorem \ref{thm:Aubin-crit}(a), which
is the simplest.
\begin{proof}
\textbf{{[}Theorem \ref{thm:Aubin-crit}(a){]}} Suppose $S$ is pseudo
strictly $H$-differentiable at $(\bar{x},\bar{y})$. Then for any
$\delta>0$, there are neighborhoods $U$ of $\bar{x}$ and $V$ of
$\bar{y}$ such that if $(x,y)\in[U\times V]\cap\gph(S)$, $p\in X\backslash\{0\}$
and $t$ is small enough so that $x+tp\in U$, then 
\begin{equation}
S(x)\cap V\subset S(x+tp)+(H+\delta)(-tp).\label{eq:KK-Lip-lsc}
\end{equation}
(This can be seen as a lower generalized differentiation property,
which resembles the \emph{lower Lipschitz} or \emph{Lipschitz lower
semicontinuous} property in \cite{KK02,KK06}.) Since $y$ lies in
the LHS of \eqref{eq:KK-Lip-lsc}, there exists some $y(t)\in S(x+tp)$
such that $y\in y(t)+(H+\delta)(-tp)$. Then 
\[
\frac{y(t)-y}{t}\in-(H+\delta)(-p).
\]
Let $\hat{y}$ be a cluster point of $\{\frac{y(t)-y}{t}\}$ as $t\searrow0$,
which exists since $Y$ is finite dimensional and $-(H+\delta)(-p)$
is compact. We have $\hat{y}\in DS(x\mid y)(p)$, so $DS(x\mid y)(p)\cap[-(H+\delta)(-p)]\neq\emptyset$
as needed. 
\end{proof}
To prove Theorem \ref{thm:Aubin-crit}(b), we need the following lemma.
\begin{lem}
\label{lem:creeping} (Estimates of generalized differentiability
from tangent cones) Suppose Assumption \ref{ass:common} holds, $X$
is complete, and assume further that $H$ is convex-valued and $\|H\|^{+}<\infty$.
Let $\delta>0$. Suppose there are neighborhoods $U$ of $\bar{x}\in X$
and $V$ of $\bar{y}\in Y$ such that whenever $(x,y)\in[U\times V]\cap\gph(S)$
and $p\in X\backslash\{0\}$, there are $(p^{\prime},q^{\prime})$
such that 
\begin{equation}
(p^{\prime},q^{\prime})\in T_{\scriptsize\gph(S)}(x,y),\mbox{ }\|p-p^{\prime}\|<\delta\|p\|\mbox{ and }q^{\prime}\in-(H+\delta)(-p).\label{eq:Aubin-compare}
\end{equation}
 Then provided $\epsilon>0$ is such that $\epsilon+\delta<1$, there
are neighborhoods $U_{\epsilon}$ of $\bar{x}$ and $V_{\epsilon}$
of $\bar{y}$ such that $x^{*}$, $x^{\circ}\in U_{\epsilon}$ implies
\[
S(x^{*})\cap V_{\epsilon}\subset S(x^{\circ})+\left(H+\delta+\epsilon+[\|H\|^{+}+\delta+\epsilon]\frac{\delta+\epsilon}{1-\delta-\epsilon}\right)(x^{*}-x^{\circ}).
\]
\end{lem}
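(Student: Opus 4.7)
The approach is a two-stage iteration: an \emph{inner loop} that uses the tangent selection~\eqref{eq:Aubin-compare} with a \emph{fixed} probing direction to travel from some $(\tilde x_0, \tilde y_0) \in \gph(S)$ most of the way toward $x^\circ$, and an \emph{outer loop} that repeats the inner loop until the residual $\|x^{(n)} - x^\circ\|$ decays geometrically to $0$. The two stages will contribute precisely the two summands $\delta + \epsilon$ and $(\|H\|^+ + \delta + \epsilon)\frac{\delta + \epsilon}{1 - \delta - \epsilon}$ in the target constant. The single structural reason for keeping the probing direction fixed throughout each inner loop is to preserve the convexity of a single slice $H(u)$.

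For the inner loop, given $(\tilde x_0, \tilde y_0)\in\gph(S)$ with target $x^\circ$, set $p = x^\circ - \tilde x_0$ and $u = -p$. At each step $k$ I apply hypothesis~\eqref{eq:Aubin-compare} with the \emph{same} $p$ (not an updated direction) at $(\tilde x_k, \tilde y_k)$, obtaining $(p'_k, q'_k)\in T_{\scriptsize\gph(S)}(\tilde x_k, \tilde y_k)$ with $\|p'_k - p\| < \delta\|p\|$ and, after rewriting $-(H+\delta)(-p) = -H(u) + \delta\|u\|\mathbb{B}$, some representation $q'_k = -h_k + \delta\|p\| b_k$ with $h_k\in H(u)$ and $b_k\in\mathbb{B}$. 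The definition of the tangent cone then furnishes $(\tilde x_{k+1}, \tilde y_{k+1}) = (\tilde x_k + \tau_k \tilde p_k, \tilde y_k + \tau_k \tilde q_k) \in \gph(S)$ with $(\tilde p_k, \tilde q_k)$ within an arbitrarily small tolerance $\eta$ of $(p'_k, q'_k)$. Taking $\tau_k = 1/N$ for $k = 0, \dots, N - 1$ (so $\sum \tau_k = 1$ \emph{exactly}) with $N$ large and tolerance $\eta$ small enough that $\sum \tau_k \eta \le \epsilon\|p\|$, summing yields $\tilde x_N = \tilde x_0 + p + O((\delta+\epsilon)\|p\|)$ and $\tilde y_N - \tilde y_0 = -\sum_k \tau_k h_k + O((\delta+\epsilon)\|p\|)$. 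The pivotal observation is that $\sum_k \tau_k h_k$ is a \emph{finite} convex combination of elements of the convex set $H(u)$ with weights summing to $1$, hence lies in $H(u) = H(\tilde x_0 - x^\circ)$; if the probing direction were allowed to vary, the $h_k$ would lie in distinct slices $H(u_k)$ and no such conclusion would be available. The inner loop thus delivers $(x^{(1)}, y^{(1)}) := (\tilde x_N, \tilde y_N) \in \gph(S)$ with $\|x^{(1)} - x^\circ\| \le \gamma\|\tilde x_0 - x^\circ\|$ and $\tilde y_0 - y^{(1)} \in H(\tilde x_0 - x^\circ) + \gamma\|\tilde x_0 - x^\circ\|\mathbb{B}$, where $\gamma := \delta + \epsilon \in (0,1)$.

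For the outer loop, set $(x^{(0)}, y^{(0)}) = (x^*, y^*)$ and let $(x^{(n+1)}, y^{(n+1)})$ be the endpoint of an inner loop launched at $(x^{(n)}, y^{(n)})$ with target $x^\circ$. Then $\|x^{(n+1)} - x^\circ\| \le \gamma\|x^{(n)} - x^\circ\|$ drives $x^{(n)} \to x^\circ$ geometrically. For $n \ge 1$, the inner-loop bound expresses $y^{(n)} - y^{(n+1)}$ as an element of $H(x^{(n)} - x^\circ)$ plus a ball of radius $\gamma\|x^{(n)} - x^\circ\|$, so using $\|h\| \le \|H\|^+\|x^{(n)} - x^\circ\|$ for $h \in H(x^{(n)} - x^\circ)$ gives $\|y^{(n+1)} - y^{(n)}\| \le (\|H\|^+ + \gamma)\gamma^n\|x^* - x^\circ\|$. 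Completeness of $Y$ forces $y^{(n)} \to y^\circ$, and local closedness of $\gph(S)$ places $(x^\circ, y^\circ) \in \gph(S)$. Combining the $n = 0$ contribution $y^* - y^{(1)} \in H(x^* - x^\circ) + \gamma\|x^* - x^\circ\|\mathbb{B}$ with the geometric-series tail $\sum_{n \ge 1}\|y^{(n+1)} - y^{(n)}\| \le (\|H\|^+ + \gamma)\frac{\gamma}{1-\gamma}\|x^* - x^\circ\|$ yields
\[
y^* - y^\circ \in \left(H + \delta + \epsilon + (\|H\|^+ + \delta + \epsilon)\tfrac{\delta + \epsilon}{1 - \delta - \epsilon}\right)(x^* - x^\circ),
\]
which is the stated conclusion.

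The main obstacle is the inner-loop bookkeeping: forcing $\sum_k \tau_k h_k$ to be a \emph{genuine} convex combination (rather than an element of $\cl H(u)$, which is unavailable since $H$ need not be closed-valued) requires $\sum \tau_k = 1$ exactly and thus a \emph{finite} number of steps $N$, which in turn forces an a priori choice of the tangent-approximation tolerance $\eta$. The remaining technicalities are routine: shrink $U_\epsilon, V_\epsilon$ inside $U, V$ so that every inner iterate $(\tilde x_k, \tilde y_k)$ produced in every outer round still lies in $U \times V$ (guaranteed since the $\tilde x_k$ hug the contracting segments $[x^{(n)}, x^\circ]$ and the $\tilde y_k$ stay bounded by the $\|H\|^+$-estimate above), and absorb the two small tolerances into the single parameter $\epsilon$ of the lemma.
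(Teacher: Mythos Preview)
Your overall architecture matches the paper's: an inner stage that travels from $(x^*,y^*)$ toward $x^\circ$ while keeping the probing direction $p=x^\circ-x^*$ fixed (so that the accumulated $y$-increments land in the single convex slice $(H+\delta+\epsilon)(-p)$), followed by an outer geometric iteration contributing the tail $(\|H\|^+ +\delta+\epsilon)\frac{\delta+\epsilon}{1-\delta-\epsilon}$. The convexity insight and the outer loop are correct and essentially identical to the paper's argument.

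The gap is in the inner-loop mechanism. You write that the tangent-cone definition ``furnishes $(\tilde x_{k+1},\tilde y_{k+1})=(\tilde x_k+\tau_k\tilde p_k,\tilde y_k+\tau_k\tilde q_k)\in\gph(S)$'' and then set $\tau_k=1/N$. But the tangent cone does not let you prescribe the step size: given $(p'_k,q'_k)\in T_{\gph(S)}(\tilde x_k,\tilde y_k)$ and a tolerance $\eta$, the definition supplies only \emph{some} $t>0$ (which can be taken arbitrarily small, but is not of your choosing) for which a nearby graph point exists. These admissible $t$'s depend on $(\tilde x_k,\tilde y_k)$ with no uniform lower bound, so nothing guarantees that finitely many of them sum to $1$; they may well sum to some $s<1$. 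The paper resolves precisely this obstacle with a supremum argument: let $\bar\tau$ be the supremum of $\tau\in[0,1]$ for which a pair $(x',y')\in\gph(S)$ satisfying the $\tau$-scaled estimates exists; show $\bar\tau>0$; show that any achieved $\tau<1$ can be strictly extended via one more tangent step; and use completeness of $Y$ together with local closedness of $\gph(S)$ to show that $\bar\tau$ is attained, forcing $\bar\tau=1$. (Your stated reason for insisting on a finite loop---avoiding $\cl\,H(u)$ in place of $H(u)$---is in fact harmless, since $\cl\,H(u)\subset H(u)+\epsilon'\mathbb{B}$ for every $\epsilon'>0$ and this is absorbed by the $(\delta+\epsilon)\|p\|$-ball already present; but dropping that worry still does not deliver $\sum_k\tau_k=1$ without the completeness-based restart that the supremum argument encodes.)
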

\begin{proof}
Let $U_{\epsilon}$ and $V_{\epsilon}$ be neighborhoods of $\bar{x}$
and $\bar{y}$ respectively such that \begin{subequations}
\begin{eqnarray}
[x^{*},x^{\circ}]+(\delta+\epsilon)\|x^{*}-x^{\circ}\|\mathbb{B} & \subset & U,\label{eq:3-hypo-1}\\
x^{\circ}+[(\delta+\epsilon)+(\delta+\epsilon)^{2}]\|x^{*}-x^{\circ}\|\mathbb{B} & \subset & U,\label{eq:3-hypo-2}\\
\mbox{and }y^{*}+\frac{1}{1-\delta-\epsilon}\|x^{*}-x^{\circ}\|[\|H\|^{+}+\delta+\epsilon]\mathbb{B} & \subset & V\label{eq:3-hypo-3}
\end{eqnarray}
\end{subequations}for all $(x^{*},y^{*})\in[U_{\epsilon}\times V_{\epsilon}]\cap\gph(S)$
and $x^{\circ}\in U_{\epsilon}$. Here, $[x^{*},x^{\circ}]$ is the
line segment connecting $x^{*}$ and $x^{\circ}$. Figure \ref{fig:pf-creep}
may be helpful in understanding the steps of the proof. We fix $(x^{*},y^{*})\in[U_{\epsilon}\times V_{\epsilon}]\cap\gph(S)$
and $x^{\circ}\in U_{\epsilon}$ and continue with the proof.

\begin{figure}
\begin{tabular}{|c|c|}
\hline 
Step 1 & Step 2\tabularnewline
\hline 
\hline 
\includegraphics[scale=0.5]{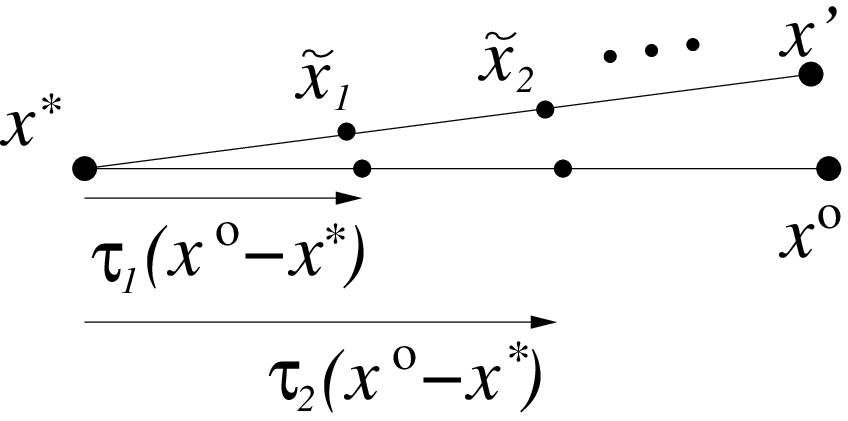} & \includegraphics[scale=0.5]{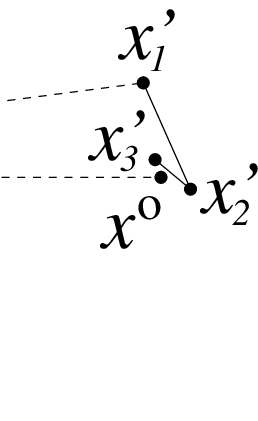}\tabularnewline
\hline 
\end{tabular}

\caption{\label{fig:pf-creep}Steps 1 and 2 of the proof of Lemma \ref{lem:creeping}.}

\end{figure}

\textbf{Step 1: There are $(x^{\prime},y^{\prime})\in\gph(S)$ such
that
\begin{eqnarray*}
 &  & \|x^{\prime}-x^{\circ}\|<(\delta+\epsilon)\|x^{\circ}-x^{*}\|\\
 & \mbox{and } & y^{\prime}-y^{*}\in-(H+\delta+\epsilon)(x^{*}-x^{\circ}).
\end{eqnarray*}
}To simplify notation, let $\tilde{p}:=x^{\circ}-x^{*}$. Let $\bar{\tau}$
be the supremum of all $\tau\in[0,1]$ such that there exists $(x^{\prime},y^{\prime})\in\gph(S)$
satisfying
\begin{eqnarray}
 &  & \|[x^{\prime}-x^{*}]-\tau\tilde{p}\|<(\delta+\epsilon)\tau\|\tilde{p}\|\nonumber \\
 & \mbox{and } & y^{\prime}-y^{*}\in-\tau(H+\delta+\epsilon)(-\tilde{p}).\label{eq:lemma-two-condns}
\end{eqnarray}

Given $(x,y)\in[U\times V]\cap\gph(S)$ and a direction $\tilde{p}\in X\backslash\{0\}$,
there are $p^{\prime}\in X$ such that $\|\tilde{p}-p^{\prime}\|<\delta\|\tilde{p}\|$
and $q^{\prime}\in-(H+\delta)(-\tilde{p})$ such that $(p^{\prime},q^{\prime})\in T_{\scriptsize\gph(S)}(x,y)$.
By the definition of tangent cones, for any $\lambda\in(0,\epsilon)$,
there is some $(x_{\lambda},y_{\lambda})\in\gph(S)$ such that $\|(x_{\lambda},y_{\lambda})-(x,y)\|<\lambda$,
$\|[x_{\lambda}-x]-tp^{\prime}\|\leq\lambda t\|\tilde{p}\|$ and $\|[y_{\lambda}-y]-tq^{\prime}\|\leq\lambda t\|\tilde{p}\|$
for some $t\in(0,1)$. We thus have
\begin{eqnarray*}
 &  & \|(x_{\lambda},y_{\lambda})-(x,y)\|<\lambda,\\
 &  & \|[x_{\lambda}-x]-t\tilde{p}\|<(\delta+\lambda)\|t\tilde{p}\|,\\
 & \mbox{and } & y_{\lambda}-y\in t[-(H+\delta)(-\tilde{p})+\lambda\|\tilde{p}\|\mathbb{B}]\\
 &  & \phantom{y_{\lambda}-y}=-(H+\delta+\lambda)(-t\tilde{p}).
\end{eqnarray*}

Taking $(x,y)=(x^{*},y^{*})$ gives us $\bar{\tau}>0$. Let $(\tilde{x}_{1},\tilde{y}_{1})\in\gph(S)$
and $\tau_{1}\in(0,1]$ be such that \eqref{eq:lemma-two-condns}
holds for $(x^{\prime},y^{\prime})=(\tilde{x}_{1},\tilde{y}_{1})$
and $\tau=\tau_{1}$. If $\tau_{1}<1$, we can use the existence of
some $(p_{1}^{\prime},q_{1}^{\prime})\in T_{\scriptsize{\gph(S)}}(\tilde{x}_{1},\tilde{y}_{1})$
and obtain the existence of $(\tilde{x}_{2},\tilde{y}_{2})\in\gph(S)$
and $\tau_{2}\in(\tau_{1},1]$ such that
\begin{eqnarray*}
 &  & \|[\tilde{x}_{2}-\tilde{x}_{1}]-(\tau_{2}-\tau_{1})\tilde{p}\|<(\delta+\epsilon)(\tau_{2}-\tau_{1})\|\tilde{p}\|\\
 & \mbox{and } & \tilde{y}_{2}-\tilde{y}_{1}\in-(\tau_{2}-\tau_{1})(H+\delta+\epsilon)(-\tilde{p}).
\end{eqnarray*}
The implication
\begin{eqnarray*}
\tilde{y}_{1}-y^{*} & \in & -\tau_{1}(H+\delta+\epsilon)(-\tilde{p})\\
\mbox{and }\tilde{y}_{2}-\tilde{y}_{1} & \in & -(\tau_{2}-\tau_{1})(H+\delta+\epsilon)(-\tilde{p})\\
\mbox{implies }\tilde{y}_{2}-y^{*} & \in & -\tau_{2}(H+\delta+\epsilon)(-\tilde{p}).
\end{eqnarray*}
requires the convexity of $(H+\delta+\epsilon)(-\tilde{p})$. These
conditions imply that \eqref{eq:lemma-two-condns} holds for $(x^{\prime},y^{\prime})=(\tilde{x}_{2},\tilde{y}_{2})$
and $\tau=\tau_{2}$. Similarly, we can obtain a Cauchy sequence $\{(x_{i},y_{i})\}$
with limit $(\tilde{x},\tilde{y})$ and $\tau_{i}\nearrow\tilde{\tau}$
such that \eqref{eq:lemma-two-condns} holds for $(x^{\prime},y^{\prime})=(x_{i},y_{i})$
and $\tau=\tau_{i}$. 

The previous steps showed us that:
\begin{enumerate}
\item [(a)]If \eqref{eq:lemma-two-condns} holds for $(x^{\prime},y^{\prime})=(x_{i},y_{i})$
and $\tau=\tau_{i}$, then we can find $(x_{i+1},y_{i+1})\in\gph(S)$
and $\tau_{i+1}$ such that $\tau_{i}<\tau_{i+1}\leq1$ and \eqref{eq:lemma-two-condns}
holds for $(x^{\prime},y^{\prime})=(x_{i+1},y_{i+1})$ and $\tau=\tau_{i+1}$.
Moreover, if $\tau_{i}<1$ for all $i$, then the sequence $\{(x_{i},y_{i})\}_{i}$
thus obtained is a Cauchy sequence.
\end{enumerate}
Another property that is easy to check is that:
\begin{enumerate}
\item [(b)]If \eqref{eq:lemma-two-condns} holds for $(x^{\prime},y^{\prime})=(x_{i},y_{i})$
and $\tau=\tau_{i}$ for sequences $\{(x_{i},y_{i})\}_{i}\subset\gph(S)$
and $\{\tau_{i}\}_{i}$ such that $\tau_{i}$ is an increasing sequence
with $\tau_{i}\leq1$ for all $i$. Suppose further that $\{(x_{i},y_{i})\}_{i}$
constructed by (a), and let $(\tilde{x},\tilde{y})$ be $\lim_{i\to\infty}(x_{i},y_{i})$
(which lies in $\gph(S)$) and $\tilde{\tau}$ be $\lim_{i\to\infty}\tau_{i}$.
Then \eqref{eq:lemma-two-condns} holds for $(x^{\prime},y^{\prime})=(\tilde{x},\tilde{y})$
and $\tau=\tilde{\tau}$.
\end{enumerate}
By making use of (a) and (b) alternately, we can find $(x^{\prime},y^{\prime})\in\gph(S)$
satisfying \eqref{eq:lemma-two-condns} for $\tau=1$. This proves
the claim in step 1.

\textbf{Step 2: Wrapping up}

So far, we have shown that for all $(x^{*},y^{*})\in[U_{\epsilon}\times V_{\epsilon}]\cap\gph(S)$
and $x^{\circ}\in U_{\epsilon}$, we can find $(x^{\prime},y^{\prime})\in[U\times V]\cap\gph(S)$
such that 
\begin{eqnarray*}
 &  & \|x^{\prime}-x^{\circ}\|<(\delta+\epsilon)\|x^{\circ}-x^{*}\|\\
 & \mbox{and } & y^{*}-y^{\prime}\in(H+\delta+\epsilon)(x^{*}-x^{\circ}).
\end{eqnarray*}

Write $(x_{1}^{\prime},y_{1}^{\prime})=(x^{\prime},y^{\prime})$,
and $\tilde{p}_{1}=x^{\circ}-x_{1}^{\prime}$. Using a similar process
as outlined in step 1 and also the fact that we can find $(p_{1}^{\prime},q_{1}^{\prime})\in T_{\scriptsize\gph(S)}(x_{1}^{\prime},y_{1}^{\prime})$
such that $\|\tilde{p}_{1}-p_{1}^{\prime}\|\leq\delta\|\tilde{p}_{1}\|$
and $q_{1}^{\prime}\in-(H+\delta)(-\tilde{p}_{1})$, we can find $(x_{2}^{\prime},y_{2}^{\prime})\in\gph(S)$
such that
\begin{eqnarray}
 &  & \|x_{2}^{\prime}-x^{\circ}\|<(\delta+\epsilon)\|x^{\circ}-x_{1}^{\prime}\|\nonumber \\
 & \mbox{and } & y_{1}^{\prime}-y_{2}^{\prime}\in(H+\delta+\epsilon)(x_{1}^{\prime}-x^{\circ})\label{eq:start-contain}\\
 &  & \phantom{y_{1}^{\prime}-y_{2}^{\prime}}\subset[\|H\|^{+}+\delta+\epsilon]\|x^{\circ}-x_{1}^{\prime}\|\mathbb{B}.\nonumber 
\end{eqnarray}
Note that $\|x^{\circ}-x_{1}^{\prime}\|<(\epsilon+\delta)\|x^{\circ}-x^{*}\|$.
The condition \eqref{eq:3-hypo-2} was defined so that step 1 can
be applied here to find $(x_{2}^{\prime},y_{2}^{\prime})$. Formula
\eqref{eq:start-contain} implies 
\begin{eqnarray*}
 &  & \|x_{2}^{\prime}-x^{\circ}\|<(\delta+\epsilon)^{2}\|x^{\circ}-x^{*}\|,\\
 & \mbox{and } & y_{1}^{\prime}-y_{2}^{\prime}\subset[\|H\|^{+}+\delta+\epsilon]\|x^{\circ}-x_{1}^{\prime}\|\mathbb{B}\\
 &  & \phantom{y_{1}^{\prime}-y_{2}^{\prime}}\subset(\epsilon+\delta)[\|H\|^{+}+\delta+\epsilon]\|x^{\circ}-x^{*}\|\mathbb{B}.
\end{eqnarray*}
Likewise, we can find $(x_{i}^{\prime},y_{i}^{\prime})\in\gph(S)$
inductively such that
\begin{eqnarray*}
 &  & \|x_{i}^{\prime}-x^{\circ}\|<(\delta+\epsilon)^{i}\|x^{\circ}-x^{*}\|,\\
 & \mbox{and } & y_{i-1}^{\prime}-y_{i}^{\prime}\subset(\delta+\epsilon)^{i-1}[\|H\|^{+}+\delta+\epsilon]\|x^{\circ}-x^{*}\|\mathbb{B}.
\end{eqnarray*}
The sequence $\{(x_{i}^{\prime},y_{i}^{\prime})\}$ is Cauchy, and
hence converges to a limit in the closed set $\gph(S)\cap[U\times V]$.
The $x$ coordinate of this limit is $x^{\circ}$. Let the $y$-coordinate
of this limit be $y^{\circ}$. Since $0<\delta+\epsilon<1$, we have
\begin{eqnarray*}
y^{*}-y^{\circ} & = & y^{*}-y_{1}^{\prime}+\sum_{i=1}^{\infty}[y_{i}^{\prime}-y_{i+1}^{\prime}]\\
 & \in & (H+\delta+\epsilon)(x^{*}-x^{\circ})+[\|H\|^{+}+\delta+\epsilon]\frac{\delta+\epsilon}{1-\delta-\epsilon}\|x^{\circ}-x^{*}\|\mathbb{B}.
\end{eqnarray*}
This gives
\begin{eqnarray*}
y^{*} & \in & y^{\circ}+\left(H+\delta+\epsilon+[\|H\|^{+}+\delta+\epsilon]\frac{\delta+\epsilon}{1-\delta-\epsilon}\right)(x^{*}-x^{\circ})\\
 & \subset & S(x^{\circ})+\left(H+\delta+\epsilon+[\|H\|^{+}+\delta+\epsilon]\frac{\delta+\epsilon}{1-\delta-\epsilon}\right)(x^{*}-x^{\circ}).
\end{eqnarray*}
Since $(x^{*},y^{*})$ is arbitrarily chosen in $[U_{\epsilon}\times V_{\epsilon}]\cap\gph(S)$
and $x^{\circ}$ is arbitrarily chosen in $U_{\epsilon}$, we are
done. 
\end{proof}
We now continue with the proof of Theorem \ref{thm:Aubin-crit}(b).
\begin{proof}
\textbf{{[}Theorem \ref{thm:Aubin-crit}(b){]}} Since $S$ is locally
closed at $(\bar{x},\bar{y})$, we can always reduce the neighborhoods
$U$ and $V$ if necessary so that $[U\times V]\cap\gph(S)$ is closed.
The condition $DS(x|y)(p)\cap[-(H+\delta)(-p)]\neq\emptyset$ easily
implies the existence of $(p^{\prime},q^{\prime})$ satisfying \eqref{eq:Aubin-compare}.
(In fact, the vector $p^{\prime}$ in \eqref{eq:Aubin-compare} can
be chosen to be $p$.) Therefore the conditions in Lemma \ref{lem:creeping}
are satisfied. Since the $\epsilon$ and $\delta$ in the statement
of Lemma \ref{lem:creeping} are arbitrary, we have the pseudo strict
$H$-differentiability of $S$ as needed.\textbf{}
\end{proof}
  The proof of Theorem \ref{thm:Aubin-crit}(c) follows with minor
modifications from the methods in \cite{DQZ06}, which were in turn
motivated by the proof of \cite[Theorem 3.2.4]{Aub91} due to Frankowska.
\begin{proof}
\textbf{{[}Theorem \ref{thm:Aubin-crit}(c){]}} Condition $(2^{\prime})$
is identical to Condition $(2)$ except for the use of the convexified
graphical derivative $D^{\star\star}S(x\mid y)$. We show that Conditions
$(2^{\prime})$ and $(2)$ are equivalent under the added conditions.
It is clear that $(2)\Rightarrow(2^{\prime})$, so we only need to
prove the opposite direction. Our proof is a slight amendment of Step
3 in the proof of \cite[Theorem 1.2]{DQZ06}. 

Suppose Condition $(2^{\prime})$ holds. Fix some $\delta>0$. For
any sets $A,B\subset X\times Y$, denote $d(A,B)$ by $d(A,B):=\inf\{\|a-b\|\mid a\in A,b\in B\}$.
Let us fix $(x,y)\in\gph(S)\cap[U\times V]$ and $p\in X\backslash\{0\}$.
Let $w\in-(H+\delta)(-p)$ and $(p^{*},q^{*})\in\gph\big(DS(x\mid y)\big)$
be such that 
\[
\|(p,w)-(p^{*},q^{*})\|=d\big(\{p\}\times[-(H+\delta)(-p)],\gph\big(DS(x\mid y)\big)\big).
\]
Observe that the point $(p^{*},q^{*})$ is the unique projection of
any point in the open segment $\big((p^{*},q^{*}),(p,w)\big)$ on
$\gph\big(DS(x\mid y)\big)$ under the Euclidean norm. We will prove
that $(p^{*},q^{*})=(p,w)$ and this will prove that $w\in DS(x\mid y)(p)\cap[-(H+\delta)(-p)]$.

By the definition of the graphical derivative, there exists sequences
$t_{n}\searrow0$, $p_{n}\to p^{*}$, $q_{n}\to q^{*}$ such that
$y+t_{n}q_{n}\in S(x+t_{n}p_{n})$ for all $n$. Let $(x_{n},y_{n})$
be a point in $\cl\,\gph(S)$ which is closest to $(x,y)+\frac{t_{n}}{2}(p^{*}+p,q^{*}+w)$
(a projection, not necessarily unique, of the latter point on the
closure of $\gph(S)$). Since $(x,y)\in\gph(S)$ we have
\[
\left\Vert (x,y)+\frac{t_{n}}{2}(p^{*}+p,q^{*}+w)-(x_{n},y_{n})\right\Vert \leq\frac{t_{n}}{2}\|(p^{*}+p,q^{*}+w)\|,
\]
and hence 
\begin{align*}
\|(x,y)-(x_{n},y_{n})\|\leq & \left\Vert (x,y)+\frac{t_{n}}{2}(p^{*}+p,q^{*}+w)-(x_{n},y_{n})\right\Vert +\frac{t_{n}}{2}\|(p^{*}+p,q^{*}+w)\|\\
\leq & t_{n}\|(p^{*}+p,q^{*}+w)\|.
\end{align*}
Thus for $n$ sufficiently large, we have $(x_{n},y_{n})\in U\times V$
and hence $(x_{n},y_{n})\in\gph(S)\cap[U\times V]$. Setting $(\bar{p}_{n},\bar{q}_{n})=(x_{n}-x,y_{n}-y)/t_{n}$,
we deduce by the usual property of a projection (under the Euclidean
norm) that 
\[
\frac{1}{2}(p^{*}+p,q^{*}+w)-(\bar{p}_{n},\bar{q}_{n})\in[T_{\scriptsize\gph(S)}(x_{n},y_{n})]^{0}=\big[\gph\big(D^{\star\star}S(x_{n}\mid y_{n})\big)\big]^{0}.
\]
By the assumptions in (2$^{\prime}$), there exists $w_{n}\in D^{\star\star}S(x_{n}\mid y_{n})(p)\cap[-(H+\delta)(-p)]$
and we have from the above relation 
\begin{equation}
\left\langle \frac{p^{*}+p}{2}-\bar{p}_{n},p\right\rangle +\left\langle \frac{q^{*}+w}{2}-\bar{q}_{n},w_{n}\right\rangle \leq0.\label{eq:DQZ-27}
\end{equation}
We claim that $(\bar{p}_{n},\bar{q}_{n})$ converges to $(p^{*},q^{*})$
as $n\to\infty$. Indeed, 
\begin{align*}
 & \left\Vert \left(\frac{p^{*}+p}{2},\frac{q^{*}+w}{2}\right)-(\bar{p}_{n},\bar{q}_{n})\right\Vert \\
= & \frac{1}{t_{n}}\left\Vert (x,y)+t_{n}\left(\frac{p^{*}+p}{2},\frac{q^{*}+w}{2}\right)-(x_{n},y_{n})\right\Vert \\
\leq & \frac{1}{t_{n}}\left\Vert (x,y)+t_{n}\left(\frac{p^{*}+p}{2},\frac{q^{*}+w}{2}\right)-(x,y)-t_{n}(p_{n},q_{n})\right\Vert \\
= & \left\Vert \left(\frac{p^{*}+p}{2},\frac{q^{*}+w}{2}\right)-(p_{n},q_{n})\right\Vert .
\end{align*}
Therefore, $(\bar{p}_{n},\bar{q}_{n})$ is a bounded sequence and
then, since $y_{n}=y+t_{n}\bar{q}_{n}\in S(x_{n})=S(x+t_{n}\bar{p}_{n})$,
every cluster point $(\bar{p},\bar{q})$ of it belongs to $\gph\big(DS(x\mid y)\big)$.
Moreover, $(\bar{p},\bar{q})$ satisfies
\[
\left\Vert \left(\frac{p^{*}+p}{2},\frac{q^{*}+w}{2}\right)-(\bar{p},\bar{q})\right\Vert \leq\left\Vert \left(\frac{p^{*}+p}{2},\frac{q^{*}+w}{2}\right)-(p^{*},q^{*})\right\Vert .
\]
The above inequality together with the fact that $(p^{*},q^{*})$
is the unique closest point to $\frac{1}{2}(p^{*}+p,q^{*}+w)$ in
$\gph\big(DS(x\mid y)\big)$ implies that $(\bar{p},\bar{q})=(p^{*},q^{*})$.
Our claim is proved. 

Up to a subsequence, $w_{n}$ satisfying \eqref{eq:DQZ-27} converges
to some $\bar{w}\in-(H+\delta)(-p)$. Passing to the limit in \eqref{eq:DQZ-27}
one obtains
\begin{equation}
\left\langle p-p^{*},p\right\rangle +\left\langle w-q^{*},\bar{w}\right\rangle \leq0.\label{eq:DQZ-28}
\end{equation}
Since $(p,w)$ is the unique closest point of $(p^{*},q^{*})$ to
the closed convex set $\{p\}\times[-(H+\delta)(-p)]$, we have
\begin{equation}
\left\langle w-q^{*},w-\bar{w}\right\rangle \leq0.\label{eq:DQZ-29}
\end{equation}
Finally, since $(p^{*},q^{*})$ is the unique closest point to $\frac{1}{2}(p^{*}+p,q^{*}+w)$
in $\gph\big(DS(x\mid y)\big)$ which is a closed cone, we get
\begin{equation}
\left\langle p-p^{*},p^{*}\right\rangle +\left\langle w-q^{*},q^{*}\right\rangle =0.\label{eq:DQA-30}
\end{equation}
In view of \eqref{eq:DQZ-28}, \eqref{eq:DQZ-29} and \eqref{eq:DQA-30},
we obtain
\begin{align*}
 & \|(p,w)-(p^{*},q^{*})\|^{2}\\
= & \left\langle w-q^{*},w-\bar{w}\right\rangle +(\left\langle p-p^{*},p\right\rangle +\left\langle w-q^{*},\bar{w}\right\rangle )-(\left\langle p-p^{*},p^{*}\right\rangle +\left\langle w-q^{*},q^{*}\right\rangle )\leq0.
\end{align*}
Hence $p=p^{*}$ and $w=q^{*}$. We have $DS(x\mid y)(p)\cap[-(H+\delta)(p)]$
containing at least the element $w$, so it cannot be empty. Since
$\delta>0$, $(x,y)\in\gph(S)\cap[U\times V]$ and $p\in X\backslash\{0\}$
are arbitrary, we have Condition (2) in Theorem \ref{thm:Aubin-crit}
as needed.
\end{proof}
As a corollary to Theorem \ref{thm:Aubin-crit}, we obtain the Aubin
criterion as proved in \cite{DQZ06}. 
\begin{thm}
\label{thm:DQZ-Aubin}(Aubin Criterion) Suppose Assumption \ref{ass:common}
holds and $X$ is complete. Let 
\[
\alpha:=\limsup_{(x,y)\xrightarrow[\scriptsize\gph(S)]{}(\bar{x},\bar{y})}\|DS(x\mid y)\|^{-}.
\]

\begin{enumerate}
\item [(a)]We have $\lip\, S(\bar{x}\mid\bar{y})\leq\alpha$, and equality
holds if $Y$ is finite dimensional.
\item [(b)]If both $X$ and $Y$ are finite dimensional Euclidean spaces,
then 
\[
\lip\, S(\bar{x}\mid\bar{y})=\limsup_{(x,y)\xrightarrow[\scriptsize\gph(S)]{}(\bar{x},\bar{y})}\|D^{\star\star}S(x\mid y)\|^{-}.
\]

\end{enumerate}
\end{thm}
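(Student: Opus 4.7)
The strategy is to specialize Theorem \ref{thm:Aubin-crit} to the positively homogeneous map $H_\kappa(w):=\kappa\|w\|\mathbb{B}$, which is convex-valued and satisfies $\|H_\kappa\|^+=\kappa<\infty$ (and is compact-valued when $Y$ is finite dimensional, and a prefan when both $X,Y$ are finite dimensional). The key translation is that, using symmetry of $\mathbb{B}$, one has $-(H_\kappa+\delta)(-p)=(\kappa+\delta)\|p\|\mathbb{B}$, so condition (2) of Theorem \ref{thm:Aubin-crit} with $H=H_\kappa$ reads
\[
\exists\, z\in DS(x\mid y)(p)\text{ with }\|z\|\le(\kappa+\delta)\|p\|,
\]
which by positive homogeneity is (up to an arbitrarily small slack) the inequality $\|DS(x\mid y)\|^-\le\kappa+\delta$. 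The same observation applies to condition (2$^\prime$) with $D^{\star\star}S$.

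For part (a), $\lip\,S(\bar x\mid\bar y)\le\alpha$: take any $\kappa>\alpha$ and any $\delta>0$, apply the definition of outer limit to obtain neighborhoods $U,V$ on which $\|DS(x\mid y)\|^-<\kappa$, deduce condition (2) for $H_\kappa$, and invoke Theorem \ref{thm:Aubin-crit}(b) to conclude $S$ is pseudo strictly $H_\kappa$-differentiable at $(\bar x,\bar y)$; take the infimum over $\kappa>\alpha$. For the reverse inequality when $Y$ is finite dimensional, pick $\kappa>\lip\,S(\bar x\mid\bar y)$ so that $S$ is pseudo strictly $H_\kappa$-differentiable, then invoke Theorem \ref{thm:Aubin-crit}(a) to obtain condition (2), translating back to $\|DS(x\mid y)\|^-\le\kappa+\delta$ on neighborhoods, whence $\alpha\le\kappa+\delta$, and take $\kappa\searrow\lip\,S$ and $\delta\searrow 0$.

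For part (b), note first that $\gph(D^{\star\star}S(x\mid y))\supset\gph(DS(x\mid y))$, so $\|D^{\star\star}S(x\mid y)\|^-\le\|DS(x\mid y)\|^-$, giving the $\le$ direction via part (a). For the reverse direction, let $\beta$ denote the right-hand side of (b) and take $\kappa>\beta$; the definition of outer limit yields neighborhoods on which $\|D^{\star\star}S(x\mid y)\|^-<\kappa$, which translates into condition (2$^\prime$) of Theorem \ref{thm:Aubin-crit} with $H_\kappa$. Since $H_\kappa$ is a prefan in the finite dimensional setting, Theorem \ref{thm:Aubin-crit}(c) gives the pseudo strict $H_\kappa$-differentiability, hence $\lip\,S(\bar x\mid\bar y)\le\kappa$, and letting $\kappa\searrow\beta$ completes the argument.

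The only delicate point is the bookkeeping between the $\delta$ used in Definition \ref{def:T-diff} (and in conditions (2)/(2$^\prime$)) and the slack needed when extracting a witness $z$ from the inner-norm bound; this is routine, handled by replacing $\delta$ by $\delta/2$ and observing that $\inf_{z\in DS(x\mid y)(p)}\|z\|<\kappa+\delta/2$ guarantees an actual $z$ with $\|z\|<\kappa+\delta$. No part of the proof requires any machinery beyond Theorem \ref{thm:Aubin-crit} and the definitions of the inner norm, outer limit, and convex hull, so I expect no substantive obstacle.
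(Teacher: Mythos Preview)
Your proposal is correct and follows the same approach as the paper: specialize Theorem \ref{thm:Aubin-crit} to $H_\kappa(w)=\kappa\|w\|\mathbb{B}$, translate conditions (2) and (2$^\prime$) into inner-norm bounds, and read off the two inequalities. The only minor deviation is in part (b), where for the inequality $\limsup\|D^{\star\star}S(x\mid y)\|^-\le\lip\,S(\bar x\mid\bar y)$ you invoke the graph inclusion $\gph(DS)\subset\gph(D^{\star\star}S)$ together with part (a), whereas the paper's ``similar'' argument would use Theorem \ref{thm:Aubin-crit}(c) symmetrically for both directions; either route works and the difference is cosmetic.
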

\begin{proof}
Recall that $S$ has the Aubin property at $(\bar{x},\bar{y})\in\gph(S)$
if and only if it is pseudo strictly $H$-differentiable there for
$H$ defined by $H(w):=\kappa\|w\|\mathbb{B}$. For a given $(x,y)\in\gph(S)$,
the smallest value of $\kappa\geq0$ such that $DS(x\mid y)(p)\cap\kappa\|p\|\mathbb{B}\neq\emptyset$
for all $p\neq0$ is $\|DS(x\mid y)\|^{-}$. 

We apply these observations to Theorem \ref{thm:Aubin-crit}. In (a),
if $\alpha<\infty$, the condition $\lip\: S(\bar{x}\mid\bar{y})\leq\alpha$
holds by Theorem \ref{thm:Aubin-crit}(b). The statement is trivially
true if $\alpha=\infty$. If $\lip\: S(\bar{x}\mid\bar{y})=\infty$,
then $\lip\: S(\bar{x}\mid\bar{y})\leq\alpha$ from before gives $\alpha=\infty$.
When $Y$ is finite dimensional and $\lip\: S(\bar{x}\mid\bar{y})$
is finite, it follows from Theorem \ref{thm:Aubin-crit}(a) that $\lip\, S(\bar{x}\mid\bar{y})=\alpha$.
For (b), the proof is similar.
\end{proof}
Theorem \ref{thm:DQZ-Aubin}(b) was also proved with viability theory
in \cite{Aub06}. 

We remark on the similarities between Lemma \ref{lem:creeping} and
Aubin's original results. For a set-valued map $S:X\rightrightarrows Y$,
the \emph{inverse} $S^{-1}:Y\rightrightarrows X$ is defined by $S^{-1}(y):=\{x\mid y\in S(x)\}$,
and satisfies $\gph(S^{-1})=\{(y,x)\mid(x,y)\in\gph(S)\}$.
\begin{rem}
(Comparison to Aubin's original results) Let $H:X\rightrightarrows Y$
be defined by $H(w):=\kappa\|w\|\mathbb{B}$. In \cite[Theorem 7.5.4]{AE84}
and \cite[Theorem 5.4.3]{AF90}, the necessary condition in both results
(up to some rephrasing) is that there are neighborhoods $U$ of $\bar{x}$
and $V$ of $\bar{y}$ such that for all $p\in X\backslash\{0\}$
and $(x,y)\in\gph(S)\cap[U\times V]$, there exists $q^{\prime}$
and $w$ such that
\[
p\in[DS(x\mid y)]^{-1}(q^{\prime})+w,\mbox{ }q^{\prime}\in-H(-p)\mbox{ and }\|w\|<\delta\|p\|.
\]
Let $p^{\prime}=p-w$. Then $\|p-p^{\prime}\|<\delta\|p\|$ and $(p^{\prime},q^{\prime})\in T_{\scriptsize\gph(S)}(x,y)$,
so the condition in \eqref{eq:Aubin-compare} is satisfied.
\end{rem}

\section{\label{sec:second}A second characterization: Limits of graphical
derivatives}

While conditions $(2)$ and $(2^{\prime})$ in Theorem \ref{thm:Aubin-crit}
characterize the generalized derivative, the presence of the term
$\delta$ may make these conditions difficult to check in practice.
In Subsection \ref{sub:second-char}, we present another characterization
of the generalized derivatives $H:X\rightrightarrows Y$ that may
be easier to check than Theorem \ref{thm:Aubin-crit}, especially
in the finite dimensional case. More specifically, consider $\{G_{i}\}_{i\in I}$,
where $G_{i}:X\rightrightarrows Y$ are positively homogeneous and
$I$ is some index set. We impose further conditions so that $S$
is pseudo strictly $H$-differentiable at $(\bar{x},\bar{y})$ if
and only if 
\[
G_{i}(p)\cap[-H(-p)]\neq\emptyset\mbox{ for all }i\in I\mbox{ and }p\in X\backslash\{0\}.
\]

\subsection{A generalized inner semicontinuity condition}

Before we move on to the next subsection for a second characterization
of generalized derivatives $H:X\rightrightarrows Y$ such that $S:X\rightrightarrows Y$
is pseudo strictly $H$-differentiable at $(\bar{x},\bar{y})$, we
propose a generalized notion of lower semicontinuity to simplify the
results in Subsection \ref{sub:second-char}. Only Definition \ref{def:gen-isc}
will be important for discussions beyond this subsection, but the
rest of this subsection provides motivation and insights of Definition
\ref{def:gen-isc}.

We say that $S:\mathbb{R}^{n}\rightrightarrows\mathbb{R}^{m}$ is
a \emph{piecewise polyhedral }map if $\gph(S)\subset\mathbb{R}^{n}\times\mathbb{R}^{m}$
is a \emph{piecewise polyhedral} set, i.e., expressible as the union
of finitely many polyhedral sets. The tangent cones at any two points
in the relative interior of a face of a polyhedron are the same, which
leads us to the following result.
\begin{prop}
\label{pro:piecewise-poly-tangent-cones}(Finitely many tangent cones
for piecewise polyhedral maps) Suppose $S:\mathbb{R}^{n}\rightrightarrows\mathbb{R}^{m}$
is piecewise polyhedral. For any point $(\bar{x},\bar{y})\in\gph(S)$,
there is a finite set $\{T_{i}\}_{i\in I}\subset\mathbb{R}^{n}\times\mathbb{R}^{m}$
such that $T_{\scriptsize\gph(S)}(x,y)=T_{i}$ for some $i\in I$
whenever $(x,y)\in\gph(S)$. In particular, if $(x,y)\in\gph(S)$
is close enough to $(\bar{x},\bar{y})$, then $T_{\scriptsize\gph(S)}(x,y)=T_{T_{\scriptsize\gph(S)}}(\tilde{x},\tilde{y})$
for some $(\tilde{x},\tilde{y})$. 
\end{prop}
We next state a piecewise polyhedral example.
\begin{example}
\label{exa:piece-poly}(Piecewise polyhedral $S_{1}:\mathbb{R}\rightrightarrows\mathbb{R}$)
Consider the piecewise polyhedral set-valued map $S_{1}:\mathbb{R}\rightrightarrows\mathbb{R}$
defined by 
\begin{equation}
S_{1}(x):=(-\infty,-|x|]\cup[|x|,\infty).\label{eq:Def-S1}
\end{equation}
See Figure \ref{fig:2-maps} for a diagram of $S_{1}$. The possibilities
for $T_{\scriptsize\gph(S_{1})}(x,y)$, where $(x,y)\in\gph(S_{1})$
and $(x,y)$ is close to $(0,0)$ are $\gph(G_{i})$ for $i\in\{1,\dots,6\}$,
where $G_{i}:\mathbb{R}\rightrightarrows\mathbb{R}$ are defined by
\begin{eqnarray}
G_{1}(x) & = & [x,\infty)\nonumber \\
G_{2}(x) & = & (-\infty,x]\nonumber \\
G_{3}(x) & = & [-x,\infty)\label{eq:T-6}\\
G_{4}(x) & = & (-\infty,-x]\nonumber \\
G_{5}(x) & = & \mathbb{R}\nonumber \\
G_{6}(x) & = & S_{1}(x).\nonumber 
\end{eqnarray}

\end{example}
(In this case, $\gph(S_{1})$ is a cone, so it doesn't matter if $(x,y)$
were not close to $(0,0)$ or not. But in the general case, we would
need $(x,y)$ to be close enough to $(0,0)$.) Notice that for the
map $S_{1}$ defined in \eqref{eq:Def-S1}, while $T_{\scriptsize\gph(S_{1})}:\gph(S_{1})\rightrightarrows\mathbb{R}\times\mathbb{R}$
is not inner semicontinuous at $(0,0)$, the possible limits for $\{T_{\scriptsize\gph(S_{1})}(x_{j},y_{j})\}$,
where $(x_{j},y_{j})\to(0,0)$, take on only a finite number of possibilities
as stated in Proposition \ref{pro:piecewise-poly-tangent-cones}.
We now define a generalized inner semicontinuity that gets around
this difficulty. 
\begin{defn}
\label{def:gen-isc}(Generalized inner semicontinuity) Let $\{T_{i}\}_{i\in I}\subset Y$,
where $I$ is some index set, and let $C\subset X$. For a closed-valued
mapping $S:C\rightrightarrows Y$ and a point $\bar{x}\in C\subset X$,
$S$ is said to be $\{T_{i}\}_{i\in I}$\emph{-inner semicontinuous
(}or $\{T_{i}\}_{i\in I}$\emph{-isc) with respect to $C$ at $\bar{x}$}
if for all $\rho>0$ and $\epsilon>0$, there exists a neighborhood
$V$ of $\bar{x}$ such that for all $x\in C\cap V$, there is some
$i\in I$ such that $T_{i}\cap\rho\mathbb{B}\subset S(x)+\epsilon\mathbb{B}$.
\end{defn}
In the case where $|I|=1$ and $T_{1}=S(\bar{x})$, $\{T_{i}\}_{i\in I}$-inner
semicontinuity reduces to the definition of inner semicontinuity.
Going back to the map $S_{1}$ in \eqref{eq:Def-S1}, we note that
the map $T_{\scriptsize\gph(S_{1})}:\gph(S_{1})\rightrightarrows\mathbb{R}\times\mathbb{R}$
is $\{\gph(G_{i})\}_{i\in I}$-isc at $(0,0)$, where the $G_{i}:\mathbb{R}\rightrightarrows\mathbb{R}$
are defined in \eqref{eq:T-6}. The choice of $\{G_{i}\}_{i\in I}$
is not unique. We can instead define $I=\{1,2\}$ and $G_{i}$ by
\begin{eqnarray}
 &  & G_{1}(x)=x\mbox{ and }G_{2}(x)=-x,\label{eq:T-2}
\end{eqnarray}
and $T_{\scriptsize\gph(S_{1})}:\gph(S_{1})\rightrightarrows\mathbb{R}\times\mathbb{R}$
will still be $\{\gph(G_{i})\}_{i\in I}$-isc at $(0,0)$. Of course,
the $\{\gph(G_{i})\}_{i\in I}$ defined in \eqref{eq:T-2} cannot
be limits of $T_{\scriptsize\gph(S_{1})}(x,y)$ as $(x,y)\xrightarrow[\scriptsize\gph(S_{1})]{}(0,0)$
(in the sense of set convergence in \cite[Definition 4.1]{RW98}).
See Lemma \ref{lem:on-slimsup} for a criterion in finite dimensions.

The index set $I$ need not be finite, as the following examples show.
\begin{example}
\label{exa:Infinite-index-set}(Infinite index set $I$) We give two
examples where the index set $I$ is necessarily infinite if Theorem
\ref{thm:Char-deriv}(a)(b) can be applied.

(a) Consider the function $f_{1}:\mathbb{R}\to\mathbb{R}$ defined
by 
\begin{equation}
f_{1}(x)=\begin{cases}
0 & \mbox{if }x=0\\
x^{2}\sin(1/x) & \mbox{otherwise,}
\end{cases}\label{eq:Def-f-1}
\end{equation}
which has Fr\'{e}chet derivative 
\[
f_{1}^{\prime}(x)=\begin{cases}
0 & \mbox{if }x=0\\
2x\sin(1/x)-\cos(1/x) & \mbox{otherwise.}
\end{cases}
\]
The map $T_{\scriptsize\gph(f_{1})}:\gph(f_{1})\rightrightarrows\mathbb{R}\times\mathbb{R}$
is $\{\gph(G_{\lambda})\}_{\lambda\in[-1,1]}$-isc at $(0,0)$, where
$G_{\lambda}:\mathbb{R}\to\mathbb{R}$ is the linear map with gradient
$\lambda$. 

(b) Next, consider the function $f_{2}:\mathbb{R}^{2}\to\mathbb{R}$
defined by $f_{2}(x)=\|x\|_{2}$. The map $T_{\scriptsize\gph(f_{2})}:\gph(f_{2})\rightrightarrows\mathbb{R}^{2}\times\mathbb{R}$
is $[\{\gph(G_{\lambda})\}_{\|\lambda\|=1}\cup T_{\scriptsize\gph(f_{2})}(0,0)]$-isc
at $(0,0)$, where $G_{\lambda}:\mathbb{R}^{2}\to\mathbb{R}$ is the
linear map with gradient $\lambda\in\mathbb{R}^{2}$. The map $f_{2}$
shows that the index set $I$ in Definition \ref{def:gen-isc} can
be infinite, even when the function is single-valued and semi-algebraic.

\end{example}

\subsection{\label{sub:second-char}A second characterization of generalized
derivatives}

For Theorem \ref{thm:Char-deriv} below, assume that the norm in $X\times Y$
is defined by $\|(p,q)\|_{X\times Y}:=\big\|(\|p\|_{X},\|q\|_{Y})\big\|_{(2)}$,
where $\|\cdot\|_{(2)}$ is the Euclidean norm in $\mathbb{R}^{2}$.
\begin{thm}
\label{thm:Char-deriv}(Characterization of generalized derivative)
Suppose Assumption \ref{ass:common} holds. Let $I$ be some index
set, and $G_{i}:X\rightrightarrows Y$ be positively homogeneous maps
for all $i\in I$. Consider the conditions
\begin{enumerate}
\item $S$ is pseudo strictly $H$-differentiable at $(\bar{x},\bar{y})$.
\item $G_{i}(p)\cap[-H(-p)]\neq\emptyset$ for all $p\in X\backslash\{0\}$
and $i\in I$.
\end{enumerate}
Then the following hold:
\begin{enumerate}
\item [(a)]Suppose $Y$ is finite dimensional and $H$ is compact-valued.
If for all $i\in I$, there exists $\{(x_{j},y_{j})\}\subset\gph(S)$
such that $(x_{j},y_{j})\to(\bar{x},\bar{y})$ and 
\[
\limsup_{j\to\infty}T_{\scriptsize\gph(S)}(x_{j},y_{j})\subset\gph(G_{i}),
\]
 then (1) implies (2). 
\item [(b)]Suppose $\|H\|^{+}$ is finite, $H$ is convex-valued, $X$
is complete, and the mapping $T_{\scriptsize\gph(S)}:\gph(S)\rightrightarrows X\times Y$
is $\{\gph(G_{i})\}_{i\in I}$-isc at $(\bar{x},\bar{y})$. Then (2)
implies (1).
\end{enumerate}
The modified statements hold if the mapping $T_{\scriptsize\gph(S)}:\gph(S)\rightrightarrows X\times Y$
was replaced by the mapping $\cl\,\co\, T_{\scriptsize{\gph}(S)}:\gph(S)\rightrightarrows X\times Y$
to the closed convex hull of the tangent cone instead in (a) and (b).
We now assume that
\begin{equation}
X\mbox{ and }Y\mbox{ are finite dimensional Euclidean spaces.}\label{eq:X-Y-finite}
\end{equation}

\begin{enumerate}
\item [(a$^{\prime}$)]Suppose \eqref{eq:X-Y-finite} holds, and $H$ is
compact-valued. If for all $i\in I$, there exists $\{(x_{j},y_{j})\}\subset\gph(S)$
such that $(x_{j},y_{j})\to(\bar{x},\bar{y})$ and 
\[
\limsup_{j\to\infty}\cl\,\co\, T_{\scriptsize{\gph}(S)}(x_{j},y_{j})\subset\gph(G_{i}),
\]
 then (1) implies (2). 
\item [(b$^{\prime}$)]Suppose \eqref{eq:X-Y-finite} holds, and $H$ is
a prefan. If the mapping $\cl\,\co\, T_{\scriptsize{\gph}(S)}:\gph(S)\rightrightarrows X\times Y$
is $\{\gph(G_{i})\}_{i\in I}$-isc at $(\bar{x},\bar{y})$, then (2)
implies (1).
\end{enumerate}
\end{thm}
\begin{proof}
\textbf{(a) }Suppose $S$ is pseudo strictly $H$-differentiable at
$(\bar{x},\bar{y})$. For each $G_{i}$, there is a sequence $\{(x_{j},y_{j})\}_{j}\subset\gph(S)$
converging to $(\bar{x},\bar{y})$ such that $\limsup_{j\to\infty}T_{\scriptsize\gph(S)}(x_{j},y_{j})\subset\gph(G_{i})$.
Fix some $p\neq0$. By Theorem \ref{thm:Aubin-crit}(a), there is
some $\delta_{j}\searrow0$ such that 
\[
DS(x_{j}\mid y_{j})(p)\cap[-(H+\delta_{j})(-p)]\neq\emptyset.
\]
Let $q_{j}$ be in the LHS of the above, and let $\bar{q}$ be a cluster
point of $\{q_{j}\}_{j=1}^{\infty}$, which exists by the compactness
of $H(-p)$. Since $\gph(G_{i})\supset\limsup_{j\to\infty}T_{\scriptsize\gph(S)}(x_{j},y_{j})$,
we have $(p,\bar{q})\in\gph(G_{i})$, and so $G_{i}(p)\cap[-H(-p)]$
contains $\bar{q}$. Hence $G_{i}(p)\cap[-H(-p)]\neq\emptyset$, which
holds for all $p\neq0$, and we are done.

\textbf{(b) }Given $\gamma>0$, we have neighborhoods $U$ of $\bar{x}$
and $V$ of $\bar{y}$ such that for all $(x,y)\in[U\times V]\cap\gph(S)$,
we have 
\begin{equation}
\gph(G_{i})\cap\mathbb{B}_{X\times Y}\subset[T_{\scriptsize\gph(S)}(x,y)]+\gamma\mathbb{B}_{X\times Y}\label{eq:g-isc-used}
\end{equation}
for some $i\in I$, where $\mathbb{B}_{X\times Y}$ is the unit ball
in $X\times Y$. Let $(x^{*},y^{*})\in\gph(S)\cap[U\times V]$, and
let $i^{*}$ be such that \eqref{eq:g-isc-used} holds for $(x,y)=(x^{*},y^{*})$
and $i=i^{*}$. Choose $p\in X\backslash\{0\}$. Since $G_{i^{*}}(p)\cap[-H(-p)]\neq\emptyset$,
choose $q\in G_{i^{*}}(p)\cap[-H(-p)]$. Then $(p,q)\in\gph(G_{i^{*}})$.
Since $\gph(G_{i^{*}})$ is a cone, we first rescale $(p,q)$ so that
$\|(p,q)\|=1$, even if $\|(p,q)\|<1$. From the fact that $\|H\|^{+}$
is finite, and the equivalence of finite dimensional norms, there
is some $\kappa>0$ such that 
\begin{eqnarray*}
1 & = & \|(p,q)\|\\
 & \leq & \kappa(\|p\|+\|q\|)\\
 & \leq & \kappa(\|p\|+\|H\|^{+}\|p\|)\\
 & = & \kappa\|p\|(1+\|H\|^{+}).
\end{eqnarray*}
 Recall that the choice of $\gamma$ in view of the generalized inner
semicontinuity property gives us some $(p^{\prime},q^{\prime})\in T_{\scriptsize\gph(S)}(x^{*},y^{*})$
such that 
\[
\|(p^{\prime},q^{\prime})-(p,q)\|\leq\gamma.
\]
We have 
\[
\|p-p^{\prime}\|\leq\gamma\leq\kappa\gamma(1+\|H\|^{+})\|p\|\mbox{ and }\|q-q^{\prime}\|\leq\gamma\leq\kappa\gamma(1+\|H\|^{+})\|p\|.
\]
The formula involving $q$ implies that $q^{\prime}\in-[H+\kappa\gamma(1+\|H\|^{+})](-p)$.
If $\gamma$ is chosen so that $\kappa\gamma(1+\|H\|^{+})<\delta$
and $(p,q)$ were rescaled to what they originally were then we can
check that the conditions in Lemma \ref{lem:creeping} are satisfied,
which easily implies that $S$ is pseudo strictly $H$-differentiable
at $(\bar{x},\bar{y})$.

\textbf{(a$^{\prime}$) }The conditions in (a$^{\prime}$) imply that
of (a), which in turn implies the conclusion in (a).

\textbf{(b$^{\prime}$) }The proof for this statement requires added
details from that of (b). Using the methods in the proof of (b), given
$\gamma>0$, we have neighborhoods $U$ of $\bar{x}$ and $V$ of
$\bar{y}$ such that for all $(x^{*},y^{*})\in[U\times V]\cap\gph(S)$
and $p\in X\backslash\{0\}$, there exists $(p^{\prime},q^{\prime})\in\cl\,\co\, T_{\scriptsize{\gph}(S)}(x^{*},y^{*})$
such that 
\[
\|p-p^{\prime}\|\leq\gamma\leq\kappa\gamma(1+\|H\|^{+})\|p\|\mbox{ and }q^{\prime}\in-[H+\kappa\gamma(1+\|H\|^{+})](-p).
\]

The current proof now departs from that in (b). We first claim that
we can reduce $U$ and $V$ if necessary so that $\|D^{\star\star}S(x\mid y)\|^{-}<(1+\|H\|^{+})$
for all $(x,y)\in[U\times V]\cap\gph(S)$. Seeking a proof by contradiction
to the claim, suppose there exists a sequence $\{(x_{j},y_{j})\}_{j}\subset\gph(S)$
such that $(x_{j},y_{j})\to(\bar{x},\bar{y})$ and $\|D^{\star\star}S(x_{j}\mid y_{j})\|^{-}\geq(1+\|H\|^{+})$
for all $j$. Then by \cite[Theorem 4.18]{RW98}, $\{\gph\big(D^{\star\star}S(x_{j}\mid y_{j})\big)\}_{j=1}^{\infty}$
has a subsequence that converges in the set-valued sense to $\gph(\tilde{G})$,
where $\tilde{G}:X\rightrightarrows Y$ is positively homogeneous
and $\|\tilde{G}\|^{-}\geq(1+\|H\|^{+})$. We must then have $\tilde{G}(p)\cap[-H(-p)]=\emptyset$
for some $p\in X\backslash\{0\}$. By the generalized inner semicontinuity
property, there is some $i^{\prime}\in I$ such that $G_{i^{\prime}}\subset\tilde{G}$,
giving us $G_{i^{\prime}}(p)\cap[-H(-p)]=\emptyset$, which is a violation
of the assumption in (2). 

From the claim we just proved, we have $\|D^{\star\star}S(x^{*}\mid y^{*})\|^{-}<1+\|H\|^{+}$.
Note also that $D^{\star\star}S(x^{*}\mid y^{*})$ is graphically
convex and positively homogeneous. By the Aubin criterion in Theorem
\ref{thm:DQZ-Aubin}, $D^{\star\star}S(x^{*}\mid y^{*})$ has the
Aubin property with $\lip\, D^{\star\star}S(x^{*}\mid y^{*})\leq1+\|H\|^{+}$.
We have 
\begin{eqnarray*}
q^{\prime} & \in & D^{\star\star}S(x^{*}\mid y^{*})(p^{\prime})\\
 & \subset & D^{\star\star}S(x^{*}\mid y^{*})(p)+(1+\|H\|^{+})\|p-p^{\prime}\|\mathbb{B}.
\end{eqnarray*}
This means that there exists $q^{\prime\prime}$ such that 
\[
\|q^{\prime\prime}-q^{\prime}\|\leq(1+\|H\|^{+})\|p-p^{\prime}\|\leq\kappa\gamma(1+\|H\|^{+})^{2}\|p\|,
\]
and $q^{\prime\prime}\in D^{\star\star}S(x^{*}\mid y^{*})(p)$. So
$q^{\prime\prime}\in-\big(H+\kappa\gamma[2+3\|H\|^{+}+(\|H\|^{+})^{2}]\big)(-p)$,
which implies 
\[
D^{\star\star}S(x^{*}\mid y^{*})(p)\cap\big[-\big(H+\kappa\gamma[2+3\|H\|^{+}+(\|H\|^{+})^{2}]\big)(-p)\big]\neq\emptyset.
\]
Since $\gamma>0$ can be made arbitrarily small, $(x^{*},y^{*})$
is arbitrary in $[U\times V]\cap\gph(S)$ and $p$ is arbitrary in
$X\backslash\{0\}$, we can apply Theorem \ref{thm:Aubin-crit}(c)
and prove what we need.
\end{proof}
We have the following simplification in the Clarke regular case.
\begin{cor}
\label{cor:regular-fd-case}(Clarke regular, finite dimensional case)
Suppose $S:\mathbb{R}^{n}\rightrightarrows\mathbb{R}^{m}$ is locally
closed and $\gph(S)$ is Clarke regular at $(\bar{x},\bar{y})\in\gph(S)$.
Let $H:\mathbb{R}^{n}\rightrightarrows\mathbb{R}^{m}$ be a prefan.
Then $S$ is pseudo strictly $H$-differentiable at $(\bar{x},\bar{y})$
if and only if 
\[
DS(\bar{x}\mid\bar{y})(p)\cap[-H(-p)]\neq\emptyset\mbox{ for all }p\in\mathbb{R}^{n}\backslash\{0\}.
\]
\end{cor}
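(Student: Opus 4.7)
The plan is to reduce the corollary to Theorem \ref{thm:Char-deriv} by choosing the singleton index family $I=\{1\}$ with $G_{1}:=DS(\bar{x}\mid\bar{y})$, so that by definition $\gph(G_{1})=T_{\scriptsize\gph(S)}(\bar{x},\bar{y})$. Under this identification, condition (2) of Theorem \ref{thm:Char-deriv} collapses to exactly the displayed conclusion $DS(\bar{x}\mid\bar{y})(p)\cap[-H(-p)]\neq\emptyset$ for all $p\in\mathbb{R}^{n}\backslash\{0\}$, so all that remains is to verify the structural hypotheses of parts (a) and (b) in each direction.

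For the ``only if'' direction I would invoke Theorem \ref{thm:Char-deriv}(a). The prefan hypothesis supplies compact-valuedness of $H$, and $Y=\mathbb{R}^{m}$ is finite dimensional. For the outer-limit condition I take the constant sequence $(x_{j},y_{j})=(\bar{x},\bar{y})$; then $\limsup_{j\to\infty}T_{\scriptsize\gph(S)}(x_{j},y_{j})=T_{\scriptsize\gph(S)}(\bar{x},\bar{y})=\gph(G_{1})$, so the required inclusion is trivially satisfied. Part (a) then yields (2) for $i=1$, which is precisely the desired conclusion.

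For the ``if'' direction I would invoke Theorem \ref{thm:Char-deriv}(b). The prefan hypothesis again provides $\|H\|^{+}<\infty$ and convex-valuedness of $H$. The remaining hypothesis --- that $T_{\scriptsize\gph(S)}$ is $\{\gph(G_{1})\}$-isc at $(\bar{x},\bar{y})$ --- is, as noted immediately after Definition \ref{def:gen-isc}, precisely the standard inner semicontinuity of $T_{\scriptsize\gph(S)}$ at $(\bar{x},\bar{y})$, because the index set has a single element and $\gph(G_{1})=T_{\scriptsize\gph(S)}(\bar{x},\bar{y})$. This is exactly the definition of Clarke regularity of $\gph(S)$ at $(\bar{x},\bar{y})$. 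Applying Theorem \ref{thm:Char-deriv}(b) then delivers pseudo strict $H$-differentiability, completing the proof. There is no genuine obstacle here: the work is entirely bookkeeping, the key observation being that Clarke regularity is what allows the family $\{G_{i}\}_{i\in I}$ in Theorem \ref{thm:Char-deriv} to collapse to a single generalized derivative, evaluated at the base point itself.
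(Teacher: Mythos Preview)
Your proposal is correct and mirrors the paper's own proof exactly: the paper also applies Theorem \ref{thm:Char-deriv}(a) and (b) with $I=\{1\}$ and $G_{1}\equiv DS(\bar{x}\mid\bar{y})$, noting that Clarke regularity is by definition the inner semicontinuity of $T_{\scriptsize\gph(S)}$. Your additional remark that the constant sequence $(x_{j},y_{j})=(\bar{x},\bar{y})$ verifies the outer-limit hypothesis in part (a) is a helpful elaboration, but the argument is otherwise identical.
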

\begin{proof}
In finite dimensions, the Clarke regularity of $\gph(S)$ is defined
by the inner semicontinuity of $T_{\scriptsize\gph(S)}:\gph(S)\rightrightarrows\mathbb{R}^{n}\times\mathbb{R}^{m}$.
Apply Theorem \ref{thm:Char-deriv}(a) and (b) for $I=\{1\}$ and
$G_{1}\equiv DS(\bar{x}\mid\bar{y})$.
\end{proof}
An easy consequence of the Clarke regularity of $\gph(S)$ is that
the positively homogeneous map $H:\mathbb{R}^{n}\rightrightarrows\mathbb{R}^{m}$
can be chosen to be single-valued.

Before we present Theorem \ref{thm:FD-gen-derv}, we need to look
at a different view of set-valued maps to analyze the tangent cone
mapping. Denote the family of closed nonempty sets in a finite dimensional
Euclidean space $X$ to be $\cls(X)$. It is known that $\cls(X)$
is a metric space under a hyperspace metric (See \cite[Section 4I]{RW98}).
We can write a set-valued map $S:\mathbb{R}^{n}\rightrightarrows\mathbb{R}^{m}$
as $S:\dom(S)\to\cls(\mathbb{R}^{m})$. We shall use $\slimsup$ to
denote the set of all possible limits (i.e., the outer limit) of $S$
in $\cls(\mathbb{R}^{m})$, that is:
\[
\slimsup_{x\xrightarrow[\scriptsize\dom(S)]{}\bar{x}}S(x):=\;\{C\subset\mathbb{R}^{m}\mid\exists x_{j}\xrightarrow[\scriptsize\dom(S)]{}\bar{x}\mbox{ s.t. }S(x_{j})\to C\}.
\]
As an example on the notation $\slimsup$, \cite[Proposition 4.19]{RW98}
can be rephrased as \begin{subequations} 
\begin{align}
\limsup_{x\to\bar{x}}S(x) & =\cup\{C\mid C\in\slimsup_{x\xrightarrow[\scriptsize\dom(S)]{}\bar{x}}S(x)\},\label{eq:limsup-set}\\
\mbox{and }\liminf_{x\to\bar{x}}S(x) & =\cap\{C\mid C\in\slimsup_{x\xrightarrow[\scriptsize\dom(S)]{}\bar{x}}S(x)\}.\label{eq:liminf-set}
\end{align}
\end{subequations}Here are further results on $\slimsup$. 
\begin{lem}
\label{lem:on-slimsup}(Finite dimensional $\slimsup$) Suppose $S:\mathbb{R}^{n}\rightrightarrows\mathbb{R}^{m}$
is closed-valued and $\{C_{i}\}_{i\in I}\subset\cls(\mathbb{R}^{m})$.
Then
\begin{enumerate}
\item [(a)]For all $i\in I$, the following are equivalent: 

\begin{enumerate}
\item [(i)]There is a sequence $\{x_{k}\}\subset\mathbb{R}^{n}$ such that
$x_{k}\to\bar{x}$ and $\limsup_{k\to\infty}S(x_{k})\subset C_{i}$.
\item [(ii)]There is some $D\in\slimsup_{x\to\bar{x}}S(x)$ such that $D\subset C_{i}$.
\end{enumerate}
\item [(b)]If for all $D\in\slimsup_{x\to\bar{x}}S(x)$, there exists $i\in I$
such that $C_{i}\subset D$, then $S$ is $\{C_{i}\}_{i\in I}$-isc
at $\bar{x}$.
\end{enumerate}
\end{lem}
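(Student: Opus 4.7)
The plan is to deduce both statements from the sequential compactness of $\mathcal{C}(\mathbb{R}^m)$ in the Painlev\'e--Kuratowski topology (as in \cite[Theorem 4.18]{RW98}), which ensures that every sequence of closed sets in $\mathbb{R}^m$ has a subsequence converging to some closed limit set.

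For part (a), the ``$\Leftarrow$'' direction is essentially a definition check. If $D\in\slimsup_{x\to\bar{x}}S(x)$ with $D\subset C_i$, pick a sequence $x_k\to\bar{x}$ realizing $S(x_k)\to D$ in the set-valued sense. Then $\limsup_{k\to\infty}S(x_k)=D\subset C_i$, as required. For the ``$\Rightarrow$'' direction, start with a sequence $x_k\to\bar{x}$ satisfying $\limsup_{k\to\infty}S(x_k)\subset C_i$, apply the compactness result to extract a subsequence with $S(x_{k_j})\to D$ for some closed set $D$, and note that $D$ belongs to $\slimsup_{x\to\bar{x}}S(x)$ by construction, while $D=\limsup_j S(x_{k_j})\subset\limsup_k S(x_k)\subset C_i$.

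For part (b), I will argue by contradiction. Assume $S$ is not $\{C_i\}_{i\in I}$-isc at $\bar{x}$, so that there exist $\rho>0$, $\epsilon>0$ and a sequence $x_k\to\bar{x}$ such that for every $k$ and every $i\in I$, $C_i\cap\rho\mathbb{B}\not\subset S(x_k)+\epsilon\mathbb{B}$. Extract a subsequence with $S(x_{k_j})\to D$ for some closed $D\in\slimsup_{x\to\bar{x}}S(x)$. By hypothesis, pick $i\in I$ with $C_i\subset D$, so that $C_i\cap\rho\mathbb{B}\subset D\cap\rho\mathbb{B}$.

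The contradiction then comes from showing $D\cap\rho\mathbb{B}\subset S(x_{k_j})+\epsilon\mathbb{B}$ for all sufficiently large $j$. Since $D\cap\rho\mathbb{B}$ is compact in $\mathbb{R}^m$, cover it with finitely many balls $\mathbb{B}_{\epsilon/2}(d_l)$, $l=1,\ldots,N$, with $d_l\in D$; the inner convergence $D\subset\liminf_j S(x_{k_j})$ (equivalently, formula \eqref{eq:liminf-set}) gives $d_{l,j}\in S(x_{k_j})$ with $d_{l,j}\to d_l$, so that for large $j$ every $d_l$ lies within $\epsilon/2$ of $S(x_{k_j})$, hence every point of $D\cap\rho\mathbb{B}$ lies within $\epsilon$ of $S(x_{k_j})$. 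This contradicts the choice of the sequence $\{x_k\}$. The main (mild) obstacle is this last finite-cover argument invoking the inner-convergence side of Painlev\'e--Kuratowski convergence, and apart from that the proof is entirely standard.
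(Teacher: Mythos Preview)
Your proof is correct and follows essentially the same route as the paper: both parts rely on the sequential compactness of closed sets under Painlev\'e--Kuratowski convergence (\cite[Theorem 4.18]{RW98}) to extract a convergent subsequence, and part (b) is argued by contradiction in the same way. The only cosmetic difference is that where the paper invokes \cite[Theorem 4.10(a)]{RW98} to pass from $C_{i}\cap\rho\mathbb{B}\not\subset S(x_{k})+\epsilon\mathbb{B}$ for all $k$ to $C_{i}\not\subset\lim_{k}S(x_{k})$, you instead carry out the contrapositive step directly via a finite $\epsilon/2$-cover of the compact set $D\cap\rho\mathbb{B}$; the two arguments are equivalent.
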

\begin{proof}
\textbf{(a)} The forward direction follows immediately from the fact
that for $x_{k}\to\bar{x}$, we can find a subsequence if necessary
so that $\lim_{k\to\infty}S(x_{k})$ exists and equals to some $D\in\slimsup_{x\to\bar{x}}S(x)$
by a straightforward application of \cite[Theorem 4.18]{RW98}. The
reverse direction is straightforward.

\textbf{(b) }We prove this by contradiction. Suppose that $S$ is
not $\{C_{i}\}_{i\in I}$-isc at $\bar{x}$. That is, there exists
$\epsilon>0$ and $\rho>0$ and a sequence $\{x_{k}\}_{k=1}^{\infty}$
such that $x_{k}\to\bar{x}$ and 
\[
C_{i}\cap\rho\mathbb{B}\not\subset S(x_{k})+\epsilon\mathbb{B}\mbox{ for all }i\in I.
\]
We may choose a subsequence of $\{x_{k}\}_{k=1}^{\infty}$ if necessary
so that $\lim_{k\to\infty}S(x_{k})$ exists. Fix $i^{*}\in I$. A
straightforward application of \cite[Theorem 4.10(a)]{RW98} shows
that 
\[
C_{i^{*}}\not\subset\liminf_{k\to\infty}S(x_{k})=\lim_{k\to\infty}S(x_{k}).
\]
Since $i^{*}$ is arbitrary and $\lim_{k\to\infty}S(x_{k})\in\slimsup_{x\to\bar{x}}S(x)$,
we have a contradiction, and our proof is complete.
\end{proof}
For the tangent cone mapping $T_{\scriptsize\gph(S)}:\gph(S)\to\cls(\mathbb{R}^{n}\times\mathbb{R}^{m})$,
we have 
\begin{align*}
 & \slimsup_{(x,y)\xrightarrow[\scriptsize\gph(S)]{}(\bar{x},\bar{y})}T_{\scriptsize\gph(S)}(x,y)\\
:= & \;\{C\subset\mathbb{R}^{n}\times\mathbb{R}^{m}\mid\exists(x_{j},y_{j})\xrightarrow[\scriptsize\gph(S)]{}(\bar{x},\bar{y})\mbox{ s.t. }T_{\scriptsize\gph(S)}(x_{j},y_{j})\to C\}.
\end{align*}
We now compare the conditions in Theorem \ref{thm:Char-deriv} with
what we can get from the outer limit $\slimsup$.
\begin{thm}
\label{thm:FD-gen-derv}(Finite dimensional characterization of generalized
derivatives) Let $S:\mathbb{R}^{n}\rightrightarrows\mathbb{R}^{m}$
be such that $S$ is locally closed at $(\bar{x},\bar{y})\in\gph(S)$,
and $H:\mathbb{R}^{n}\rightrightarrows\mathbb{R}^{m}$ be a prefan.
Then $S$ is pseudo strictly $H$-differentiable at $(\bar{x},\bar{y})$
if and only if 
\begin{align}
G(p)\cap[-H(-p)]\neq\emptyset\nonumber \\
\mbox{ whenever } & p\in\mathbb{R}^{n}\backslash\{0\}\mbox{ and }\gph(G)\in\slimsup_{(x,y)\xrightarrow[\scriptsize\gph(S)]{}(\bar{x},\bar{y})}\underbrace{T_{\scriptsize\gph(S)}(x,y)}_{\alpha}.\label{eq:FD-gen-form}
\end{align}

The above continues to hold if the term $\alpha$ in \eqref{eq:FD-gen-form}
is replaced by $\cl\,\co\: T_{\scriptsize{\gph}(S)}$, the closed
convex hull of the tangent cone. \end{thm}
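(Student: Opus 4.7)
The plan is to reduce Theorem \ref{thm:FD-gen-derv} to Theorem \ref{thm:Char-deriv}(a$^\prime$) and (b$^\prime$) by making a single judicious choice of index set, and to use Lemma \ref{lem:on-slimsup} as the bridge between the $\slimsup$ formulation and the generalized inner semicontinuity hypothesis required there. Specifically, take
\[
I := \{G : \mathbb{R}^n \rightrightarrows \mathbb{R}^m \text{ positively homogeneous with } \gph(G) \in \slimsup_{(x,y)\xrightarrow[\scriptsize\gph(S)]{}(\bar{x},\bar{y})} T_{\scriptsize\gph(S)}(x,y)\},
\]
and set $G_i := i$ for $i \in I$. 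Because $H$ is a prefan, it is compact-valued, convex-valued, and satisfies $\|H\|^+ < \infty$, so the structural hypotheses of both (a$^\prime$) and (b$^\prime$) are automatic.

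For the forward direction, I would apply Theorem \ref{thm:Char-deriv}(a$^\prime$). For each $G \in I$, the very definition of the set-valued outer limit $\slimsup$ furnishes a sequence $(x_j, y_j) \xrightarrow[\scriptsize\gph(S)]{} (\bar{x}, \bar{y})$ with $T_{\scriptsize\gph(S)}(x_j, y_j) \to \gph(G)$ in the Painlev\'e–Kuratowski sense; in particular $\limsup_{j\to\infty} T_{\scriptsize\gph(S)}(x_j, y_j) \subset \gph(G)$, verifying the hypothesis of (a$^\prime$). Pseudo strict $H$-differentiability then yields $G(p) \cap [-H(-p)] \neq \emptyset$ for all $p \neq 0$.

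For the reverse direction, I would apply Theorem \ref{thm:Char-deriv}(b$^\prime$), which requires that $T_{\scriptsize\gph(S)}$ be $\{\gph(G)\}_{G \in I}$-isc at $(\bar{x},\bar{y})$. This is immediate from Lemma \ref{lem:on-slimsup}(b): given any $D \in \slimsup_{(x,y)\to(\bar{x},\bar{y})} T_{\scriptsize\gph(S)}(x,y)$, the set $D$ is by construction the graph of some $G \in I$, so the trivial inclusion $\gph(G) \subset D$ supplies the hypothesis of that lemma. Then (b$^\prime$), together with the condition \eqref{eq:FD-gen-form}, delivers pseudo strict $H$-differentiability of $S$ at $(\bar{x}, \bar{y})$.

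The convexified variant (replacing the term $\alpha$ by $\cl\,\co\, T_{\scriptsize\gph(S)}$) is handled by the identical two-step argument with $T_{\scriptsize\gph(S)}$ systematically replaced by $\cl\,\co\, T_{\scriptsize\gph(S)}$ throughout; this replacement is legitimate because $\cl\,\co\, T_{\scriptsize\gph(S)}(x,y)$ is closed-valued, so Lemma \ref{lem:on-slimsup} applies verbatim to it, and Theorem \ref{thm:Char-deriv}(a$^\prime$)/(b$^\prime$) are stated with the convexified tangent map in view. There is no real obstacle in this argument beyond the bookkeeping of identifying the index set $I$ with the collection of graphs appearing in the $\slimsup$; once that correspondence is in place, Lemma \ref{lem:on-slimsup} does the translation and Theorem \ref{thm:Char-deriv} does the substantive work.
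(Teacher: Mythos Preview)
Your approach is exactly the paper's: set $\{\gph(G_i)\}_{i\in I}$ equal to the relevant $\slimsup$, then invoke Lemma~\ref{lem:on-slimsup} to translate between the $\slimsup$ formulation and the hypotheses of Theorem~\ref{thm:Char-deriv}. The substantive reasoning is correct.

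There is, however, a labeling slip. For the \emph{unconvexified} statement (where $\alpha = T_{\scriptsize\gph(S)}(x,y)$) you should be citing parts (a) and (b) of Theorem~\ref{thm:Char-deriv}, not (a$^\prime$) and (b$^\prime$). Part (a$^\prime$) requires $\limsup_{j\to\infty}\cl\,\co\,T_{\scriptsize\gph(S)}(x_j,y_j)\subset\gph(G_i)$, whereas your sequence only gives $\limsup_{j\to\infty}T_{\scriptsize\gph(S)}(x_j,y_j)\subset\gph(G_i)$; since $T\subset\cl\,\co\,T$, the latter does not imply the former. Likewise (b$^\prime$) asks that $\cl\,\co\,T_{\scriptsize\gph(S)}$ be $\{\gph(G_i)\}_{i\in I}$-isc, while your application of Lemma~\ref{lem:on-slimsup}(b) only establishes this for $T_{\scriptsize\gph(S)}$. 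You clearly know the distinction (your final paragraph says (a$^\prime$)/(b$^\prime$) ``are stated with the convexified tangent map in view''), so this appears to be a transcription error: use (a)/(b) for the tangent-cone version and (a$^\prime$)/(b$^\prime$) for the convexified version. With that fix, the proof is complete and coincides with the paper's.
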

\begin{proof}
Let $\slimsup_{(x,y)\xrightarrow[\scriptsize\gph(S)]{}(\bar{x},\bar{y})}T_{\scriptsize\gph(S)}(x,y)=\{\gph(G_{i})\}_{i\in I}$
and use Theorem \ref{thm:Char-deriv} and Lemma \ref{lem:on-slimsup}.
The case of the closed convex hull is similar.
\end{proof}

\section{Examples}

We illustrate how the results in Subsection \ref{sub:second-char}
can be used to characterize the generalized derivatives $H:X\rightrightarrows Y$
in various cases.
\begin{example}
\label{exa:Char-derv}(Characterizing generalized derivatives) We
apply the results in Subsection \ref{sub:second-char} to characterize
the prefans $H:X\rightrightarrows Y$ that are the generalized derivatives
in several functions defined earlier.
\begin{enumerate}
\item Consider the map $S_{1}:\mathbb{R}\rightrightarrows\mathbb{R}$ defined
by $S_{1}(x)=(-\infty,-|x|]\cup[|x|,\infty)$. See Figure \ref{fig:2-maps}.
Let $I=\{1,\dots,6\}$, define $G_{i}:\mathbb{R}\rightrightarrows\mathbb{R}$
as in \eqref{eq:T-6}, and check that $\{G_{i}\}_{i\in I}$ are such
that $\{\gph(G_{i})\}_{i\in I}$ equals $\underset{(x,y)\xrightarrow[\scriptsize\gph(S_{1})]{}(0,0)}{\slimsup}T_{\scriptsize\gph(S_{1})}(x,y)$.
Hence Theorem \ref{thm:FD-gen-derv} is applicable at $(0,0)$. We
observe that 
\begin{align*}
G_{1}(1)\cap[-H(-1)]\neq\emptyset\mbox{ and }G_{4}(1)\cap[-H(-1)]\neq\emptyset\\
\mbox{ is equivalent to } & [-1,1]\subset H(-1),\\
\mbox{ and }G_{2}(-1)\cap[-H(1)]\neq\emptyset\mbox{ and }G_{3}(-1)\cap[-H(1)]\neq\emptyset\\
\mbox{ is equivalent to } & [-1,1]\subset H(1).
\end{align*}
So $S_{1}$ is pseudo strictly $H$-differentiable at $(0,0)$ if
and only if $[-|p|,|p|]\subset H(p)$ for all $p\in\mathbb{R}$. \\
Note that $G_{5}(x)$ does not set any restriction on $H$. Observe
that we only used $G_{i}$ for $i=1,2,3,4$ in \eqref{eq:T-6}. We
can also apply Theorem \ref{thm:FD-gen-derv} with the fact that $\{\gph(G_{i})\}_{i\in\{1,\dots,5\}}$
equals $\underset{(x,y)\xrightarrow[\scriptsize\gph(S_{1})]{}(0,0)}{\slimsup}\cl\,\co\, T_{\scriptsize\gph(S_{1})}(x,y)$
to see that $G_{6}$ is not needed in characterizing the generalized
derivative $H$. 
\item Consider the map $S_{2}:\mathbb{R}\rightrightarrows\mathbb{R}$ defined
by $S_{2}(x):=\{x\}\cup\{-x\}$. See Figure \ref{fig:2-maps}. Let
$I=\{1,2,3\}$, and define $G_{i}:\mathbb{R}\rightrightarrows\mathbb{R}$
by
\begin{eqnarray*}
G_{1}(x) & = & x\\
G_{2}(x) & = & -x\\
G_{3}(x) & = & S_{2}(x).
\end{eqnarray*}
Then $\{G_{i}\}_{i\in I}$ are such that $\{\gph(G_{i})\}_{i\in I}$
equals $\slimsup_{(x,y)\xrightarrow[\scriptsize\gph(S)]{}(0,0)}T_{\scriptsize\gph(S_{2})}(x,y)$,
and hence Theorem \ref{thm:FD-gen-derv} is applicable at $(0,0)$.
We can easily check that $S_{2}$ is pseudo strictly $H$-differentiable
at $(0,0)$ if and only if $[-1,1]\subset H(p)$ for $p=1,-1$. 
\item Consider the maps $f_{1}:\mathbb{R}\to\mathbb{R}$ and $f_{2}:\mathbb{R}^{2}\to\mathbb{R}$
in Example \ref{exa:Infinite-index-set}. With additional work, we
get $f_{i}$ is pseudo strictly $H$-differentiable at $(0,0)$ if
and only if $\mathbb{B}\subset H(p)$ for all $\|p\|=1$ for both
$i=1,2$.
\end{enumerate}
\end{example}
We remark on the assumption that $H$ is convex-valued in many of
the results in this paper.
\begin{rem}
\label{rem:convex-val}(Convex-valuedness of $H$) Consider the set-valued
maps $S_{1}:\mathbb{R}\rightrightarrows\mathbb{R}$ and $S_{2}:\mathbb{R}\rightrightarrows\mathbb{R}$
as defined in Example \ref{exa:Char-derv}. Also define $S_{3}:\mathbb{R}\rightrightarrows\mathbb{R}$
by 
\[
S_{3}(x)=\begin{cases}
\{x\}\cup\{-x\} & \mbox{ if }x\leq0\\
{}[-x,x] & \mbox{ if }x\geq0.
\end{cases}
\]
Define the map $H^{\prime}:\mathbb{R}\rightrightarrows\mathbb{R}$
by $H^{\prime}\equiv S_{2}$. See Figure \ref{fig:2-maps}. Note that
$H^{\prime}$ is not convex-valued, but satisfies all other requirements
in Theorems \ref{thm:Aubin-crit} and \ref{thm:Char-deriv} for both
$S_{1}$ and $S_{3}$. While $S_{1}$ is pseudo strictly $H^{\prime}$-differentiable
at $(0,0)$, $S_{3}$ is not.
\end{rem}

\begin{figure}
\begin{tabular}{|c|c|c|}
\hline 
\includegraphics[scale=0.4]{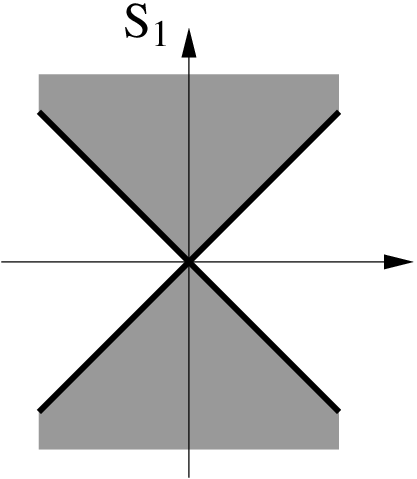} & \includegraphics[scale=0.4]{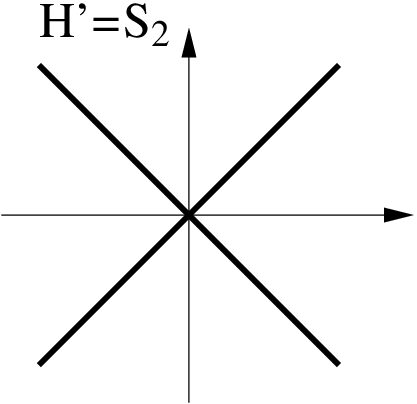} & \includegraphics[scale=0.4]{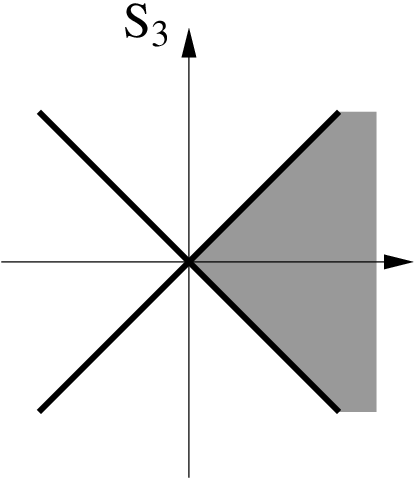}\tabularnewline
\hline 
\end{tabular}

\caption{\label{fig:2-maps}The maps $S_{i}:\mathbb{R}\rightrightarrows\mathbb{R}$
for $i=1,2,3$ are used in Remark \ref{rem:convex-val} and in Examples
\ref{exa:piece-poly} and \ref{exa:Char-derv}.}
\end{figure}

\section{\label{sec:third-char}A third characterization: Extending the normal
cone approach}

For $S:X\rightrightarrows Y$, the Mordukhovich criterion expresses
$\lip\: S(\bar{x}\mid\bar{y})$ in terms of the limiting normal cone.
In this section, we make use of previous results to show how the limiting
normal cone can give a characterization of the generalized derivative
$H:X\rightrightarrows Y$ when $X=\mathbb{R}^{n}$ and $Y=\mathbb{R}^{m}$.
We also show that the convexified coderivatives have a bijective relationship
with the set of possible generalized derivatives. 

We start by defining the limiting normal cone. 
\begin{defn}
\label{def:normal-cones}(Normal cones) For a set $C\subset\mathbb{R}^{n}$,
the \emph{regular normal cone} at $\bar{x}$ is defined as 
\[
\hat{N}_{C}(\bar{x}):=\{y\mid\left\langle y,x-\bar{x}\right\rangle \leq o(\|x-\bar{x}\|)\mbox{ for all }x\in C\}.
\]
The \emph{limiting }(or\emph{ Mordukhovich})\emph{ normal cone} $N_{C}(\bar{x})$
is defined as $\limsup_{x\xrightarrow[C]{}\bar{x}}\hat{N}_{C}(x)$,
or as 
\[
N_{C}(\bar{x})=\{y\mid\mbox{there exists }x_{i}\xrightarrow[C]{}\bar{x},\, y_{i}\in\hat{N}_{C}(x_{i})\mbox{ such that }y_{i}\to y\}.
\]

\end{defn}
In Lemma \ref{lem:normals-come-in} and Theorem \ref{thm:boris-crit}
below, let $\mathbb{R}_{+}=[0,\infty)$ so that for $v\neq0$, $\mathbb{R}_{+}\{v\}$
is the cone generated by $v$. We shall refer to positively homogeneous
maps that have convex graphs as \emph{convex processes}, as is commonly
done in the literature.
\begin{lem}
\label{lem:normals-come-in}(Polar cone criteria) Let $G:\mathbb{R}^{n}\rightrightarrows\mathbb{R}^{m}$
be a convex process with closed graph, and $H:\mathbb{R}^{n}\rightrightarrows\mathbb{R}^{m}$
be a prefan. For each $(u,v)\in[\gph(G)]^{0}\subset\mathbb{R}^{n}\times\mathbb{R}^{m}$,
define $\tilde{G}_{(u,v)}:\mathbb{R}^{n}\rightrightarrows\mathbb{R}^{m}$
by $\gph(\tilde{G}_{(u,v)})=[\mathbb{R}_{+}\{(u,v)\}]^{0}$. Then
\begin{align*}
 & G(p)\cap[-H(-p)]\neq\emptyset\mbox{ for all }p\in\mathbb{R}^{n}\backslash\{0\}\\
\mbox{if and only if }\quad & \tilde{G}_{(u,v)}(p)\cap[-H(-p)]\neq\emptyset\mbox{ for all }p\in\mathbb{R}^{n}\backslash\{0\}\mbox{ and }(u,v)\in[\gph(G)]^{0}.
\end{align*}
\end{lem}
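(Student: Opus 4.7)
The forward direction is immediate. For any $(u,v) \in [\gph(G)]^0$, the defining inequality $\langle u, p' \rangle + \langle v, q' \rangle \leq 0$ holds on every $(p', q') \in \gph(G)$, so $\gph(G) \subset \gph(\tilde{G}_{(u,v)})$, and in particular $G(p) \subset \tilde{G}_{(u,v)}(p)$ for each $p \in \mathbb{R}^n$. Any $q \in G(p) \cap [-H(-p)]$ therefore already witnesses the nonemptiness of $\tilde{G}_{(u,v)}(p) \cap [-H(-p)]$.

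For the reverse direction I would argue by contraposition: assume some $p_0 \neq 0$ satisfies $G(p_0) \cap [-H(-p_0)] = \emptyset$, and produce $(u,v) \in [\gph(G)]^0$ with $\tilde{G}_{(u,v)}(p_0) \cap [-H(-p_0)] = \emptyset$. The geometric picture is the following: $\gph(G)$ is a convex cone in $\mathbb{R}^n \times \mathbb{R}^m$ which, without loss of generality, is closed (replacing $\gph(G)$ by its closure leaves the polar $[\gph(G)]^0$ unchanged, and the left-hand side of the desired equivalence then only becomes easier to violate), while $\{p_0\} \times [-H(-p_0)]$ is compact and convex by the prefan hypothesis on $H$. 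The standing assumption is precisely that these two sets are disjoint.

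The plan is now to apply the strict separating hyperplane theorem for a closed convex cone and a disjoint compact convex set: this yields $(u,v) \in \mathbb{R}^n \times \mathbb{R}^m$ and $\alpha > 0$ such that
\[
\langle u, p' \rangle + \langle v, q' \rangle \leq 0 \text{ for every } (p', q') \in \gph(G) \quad \text{and} \quad \langle u, p_0 \rangle + \langle v, q \rangle \geq \alpha \text{ for every } q \in -H(-p_0).
\]
Here the first inequality follows from the standard fact that any linear functional bounded above on a cone must be nonpositive on it, and it is exactly the statement $(u, v) \in [\gph(G)]^0$. The second inequality places every point of $\{p_0\} \times [-H(-p_0)]$ strictly in the open half-space $\langle u, \cdot \rangle + \langle v, \cdot \rangle > 0$, and hence strictly outside $\gph(\tilde{G}_{(u,v)}) = \{(p',q') : \langle u, p' \rangle + \langle v, q' \rangle \leq 0\}$; this translates directly to $\tilde{G}_{(u,v)}(p_0) \cap [-H(-p_0)] = \emptyset$, as required.

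The only real obstacle is justifying the strict separation step cleanly: it needs both the closedness of $\gph(G)$ (or of its closure, which has the same polar cone) and the compactness of $-H(-p_0)$. The former is arranged by passing to the closure, which costs nothing; the latter is exactly what the prefan hypothesis on $H$ supplies, and it is also the reason the lemma is stated for a prefan rather than for an arbitrary positively homogeneous map.
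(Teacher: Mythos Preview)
Your argument is essentially the paper's own: the forward direction via $\gph(G)\subset\gph(\tilde G_{(u,v)})$, and the reverse direction by contraposition, separating the convex cone $\gph(G)$ from the compact convex set $\{p_0\}\times[-H(-p_0)]$ to produce the required $(u,v)\in[\gph(G)]^0$.

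One small wrinkle: your parenthetical justification for assuming $\gph(G)$ closed has the direction reversed. Passing to the closure enlarges $G(p_0)$, so the condition $G(p_0)\cap[-H(-p_0)]=\emptyset$ becomes \emph{harder} to violate, not easier; in particular, from $G(p_0)\cap[-H(-p_0)]=\emptyset$ you cannot conclude the same for $\bar G$, and strict separation could fail. The paper's proof glosses over exactly the same point, simply asserting strict separation of the two convex sets. In the paper's applications $G$ always arises as a limit of closed convex tangent cones and so has closed graph, which is what actually makes the strict separation step legitimate; you may simply want to note that the closedness of $\gph(G)$ is used here rather than claim it is without loss of generality.
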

\begin{proof}
It is clear that for each $(u,v)\in[\gph(G)]^{0}$, we have $G\subset\tilde{G}_{(u,v)}$,
so the forward direction is easy. We now prove the reverse direction
by contradiction.

Suppose $G(\bar{p})\cap[-H(-\bar{p})]=\emptyset$ for some $\bar{p}\neq0$.
This means that the convex sets $\gph(G)$ and $\{\bar{p}\}\times[-H(-\bar{p})]$
do not intersect, so there exists some $(\bar{u},\bar{v})\in\mathbb{R}^{n}\times\mathbb{R}^{m}$
and $\alpha\in\mathbb{R}$ such that 
\begin{align*}
\left\langle (\bar{u},\bar{v}),(x,y)\right\rangle <\alpha & \mbox{ for all }(x,y)\in\gph(G)\\
\mbox{and }\left\langle (\bar{u},\bar{v}),(\bar{p},y)\right\rangle >\alpha & \mbox{ for all }y\in[-H(-\bar{p})].
\end{align*}
Since $(0,0)\in\gph(G)$, $\alpha$ must be positive. Furthermore,
since $\gph(G)$ is a cone, we have\begin{subequations} 
\begin{align}
\left\langle (\bar{u},\bar{v}),(x,y)\right\rangle \leq0 & \mbox{ for all }(x,y)\in\gph(G)\label{eq:sep-lem-a}\\
\mbox{and }\left\langle (\bar{u},\bar{v}),(\bar{p},y)\right\rangle >0 & \mbox{ for all }y\in[-H(-\bar{p})].\label{eq:sep-lem-b}
\end{align}
\end{subequations}Note that \eqref{eq:sep-lem-a} implies that $(\bar{u},\bar{v})\in[\gph(G)]^{0}$,
and that $\tilde{G}_{(\bar{u},\bar{v})}:\mathbb{R}^{n}\rightrightarrows\mathbb{R}^{m}$
is defined by $\tilde{G}_{(\bar{u},\bar{v})}(p):=\{y\mid\left\langle (\bar{u},\bar{v}),(p,y)\right\rangle \leq0\}$.
By the definition of $\tilde{G}_{(\bar{u},\bar{v})}$ and \eqref{eq:sep-lem-b},
we have $\tilde{G}_{(\bar{u},\bar{v})}(\bar{p})\cap[-H(-\bar{p})]=\emptyset$,
which is what we need.
\end{proof}
We now recall the definition of coderivatives.
\begin{defn}
(Coderivatives) For a set-valued map $S:\mathbb{R}^{n}\rightrightarrows\mathbb{R}^{m}$
and $(\bar{x},\bar{y})\in\gph(S)$, the \emph{regular coderivative}
at $(\bar{x},\bar{y})$, denoted by $\hat{D}^{*}S(\bar{x}\mid\bar{y}):\mathbb{R}^{m}\rightrightarrows\mathbb{R}^{n}$,
is defined by
\begin{align*}
v\in\hat{D}^{*}S(\bar{x}\mid\bar{y})(u) & \Leftrightarrow(v,-u)\in\hat{N}_{\scriptsize\gph(S)}(\bar{x},\bar{y})\\
 & \Leftrightarrow\left\langle (v,-u),(x,y)-(\bar{x},\bar{y})\right\rangle \leq o\big(\|(x,y)-(\bar{x},\bar{y})\|\big)\\
 & \qquad\qquad\mbox{ for all }(x,y)\in\gph(S).
\end{align*}
The \emph{limiting coderivative }(or \emph{Mordukhovich coderivative})
at $(\bar{x},\bar{y})\in\gph(S)$ is denoted by $D^{*}S(\bar{x}\mid\bar{y}):\mathbb{R}^{m}\rightrightarrows\mathbb{R}^{n}$
and is defined by  
\[
v\in D^{*}S(\bar{x}\mid\bar{y})(u)\Leftrightarrow(v,-u)\in N_{\scriptsize\gph(S)}(\bar{x},\bar{y}).
\]

\end{defn}
In the definitions of both the regular and limiting coderivatives,
the minus sign before $u$ is necessary so that if $f:\mathbb{R}^{n}\to\mathbb{\mathbb{R}}^{m}$
is $\mathcal{C}^{1}$ at $\bar{x}$, then 
\[
D^{*}f(\bar{x}\mid f(\bar{x}))(y)=\nabla f(\bar{x})^{*}y\mbox{ for all }y\in\mathbb{R}^{m}.
\]

We can now state the main result of this section.
\begin{thm}
\label{thm:boris-crit}(Generalized Mordukhovich criterion) Let $S:\mathbb{R}^{n}\rightrightarrows\mathbb{R}^{m}$
be locally closed at $(\bar{x},\bar{y})\in\gph(S)$ and let $H:\mathbb{R}^{n}\rightrightarrows\mathbb{R}^{m}$
be a prefan. Then $S$ is pseudo strictly $H$-differentiable at $(\bar{x},\bar{y})$
if and only if any of the following equivalent conditions hold: 
\begin{enumerate}
\item [(a)]For all $p\in\mathbb{R}^{n}\backslash\{0\}$ and $(v,-u)\in N_{\scriptsize\gph(S)}(\bar{x},\bar{y})$,
there exists $y\in H(p)$ s.t. $\left\langle u,y\right\rangle \leq\left\langle v,p\right\rangle $. 
\item [(b)]For all $p\in\mathbb{R}^{n}\backslash\{0\}$ and $u\in\mathbb{R}^{m}$,
$\underset{y\in H(p)}{\min}\left\langle u,y\right\rangle \leq\underset{v\in D^{*}S(\bar{x}\mid\bar{y})(u)}{\min}\left\langle v,p\right\rangle .$
\item [(c)]For all $p\in\mathbb{R}^{n}\backslash\{0\}$ and $u\in\mathbb{R}^{m}$,
$\underset{y\in H(p)}{\min}\left\langle u,y\right\rangle \leq\underset{v\in\scriptsize{\cl\,\co\,}D^{*}S(\bar{x}\mid\bar{y})(u)}{\min}\left\langle v,p\right\rangle .$
\end{enumerate}
\end{thm}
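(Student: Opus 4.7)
The plan is to combine the convexified-tangent-cone version of Theorem \ref{thm:FD-gen-derv} with Lemma \ref{lem:normals-come-in}, and then translate the resulting polar-cone condition into one stated in terms of the limiting normal cone.

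I would first verify (a) $\Leftrightarrow$ (b) $\Leftrightarrow$ (c) by direct inspection. The equivalence (b) $\Leftrightarrow$ (c) is immediate because $v\mapsto\langle v,p\rangle$ is linear and continuous, so its infimum over $D^{*}S(\bar{x}\mid\bar{y})(u)$ coincides with that over $\cl\,\co\,D^{*}S(\bar{x}\mid\bar{y})(u)$. For (a) $\Leftrightarrow$ (b), I would use the defining identity $(u,v)\in N_{\scriptsize\gph(S)}(\bar{x},\bar{y})\Leftrightarrow u\in D^{*}S(\bar{x}\mid\bar{y})(-v)$, together with the relabeling $u'=-v$; the ``$\min\leq\min$'' form in (b) unwinds to ``for every $v\in D^{*}S(u)$ there exists $y\in H(p)$ with $\langle u,y\rangle\leq\langle v,p\rangle$'', which, after the sign flip, matches the existential statement in (a).

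For the main equivalence with pseudo strict $H$-differentiability, Theorem \ref{thm:FD-gen-derv} applied with $\cl\,\co\,T_{\scriptsize\gph(S)}$ says that $S$ is pseudo strictly $H$-differentiable at $(\bar{x},\bar{y})$ iff $G(p)\cap[-H(-p)]\neq\emptyset$ for all $p\in\mathbb{R}^{n}\setminus\{0\}$ and every $G$ with
\[
\gph(G)\in\slimsup_{(x,y)\xrightarrow[\scriptsize\gph(S)]{}(\bar{x},\bar{y})}\cl\,\co\,T_{\scriptsize\gph(S)}(x,y).
\]
Each such $G$ is a convex process, so Lemma \ref{lem:normals-come-in} rewrites the intersection condition as: for every $(u,v)\in[\gph(G)]^{0}$, the set $\tilde{G}_{(u,v)}(p)\cap[-H(-p)]$ is nonempty. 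Unpacking $\tilde{G}_{(u,v)}(p)=\{y:\langle u,p\rangle+\langle v,y\rangle\leq0\}$, substituting $z=-y$ with $z\in H(-p)$, and then renaming $-p\mapsto p$, this becomes ``there exists $z\in H(p)$ with $\langle v,z\rangle\geq-\langle u,p\rangle$''---precisely the inequality in (a), but for $(u,v)$ ranging over $[\gph(G)]^{0}$ as $G$ varies.

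The remaining, and main technical, step is to verify
\[
\bigcup\left\{[\gph(G)]^{0}\mid\gph(G)\in\slimsup_{(x,y)\xrightarrow[\scriptsize\gph(S)]{}(\bar{x},\bar{y})}\cl\,\co\,T_{\scriptsize\gph(S)}(x,y)\right\}=N_{\scriptsize\gph(S)}(\bar{x},\bar{y}).
\]
The containment $\subset$ uses that $[\cl\,\co\,T_{\scriptsize\gph(S)}(x,y)]^{0}=\hat{N}_{\scriptsize\gph(S)}(x,y)$ in finite dimensions and that the polar-cone operation is continuous on closed convex cones under set convergence; hence a limit of convexified tangent cones yields a limit of regular normals, which lies in $N_{\scriptsize\gph(S)}(\bar{x},\bar{y})$. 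For $\supset$, any $(u,v)\in N_{\scriptsize\gph(S)}(\bar{x},\bar{y})$ is a limit of regular normals $(u_{j},v_{j})\in\hat{N}_{\scriptsize\gph(S)}(x_{j},y_{j})$ along a sequence $(x_{j},y_{j})\to(\bar{x},\bar{y})$, and \cite[Theorem 4.18]{RW98} lets us pass to a subsequence along which $\cl\,\co\,T_{\scriptsize\gph(S)}(x_{j},y_{j})$ converges to some $\gph(G)$; continuity of polars then places $(u,v)$ in $[\gph(G)]^{0}$. Chaining the union identity with the rewrites of the previous paragraph yields equivalence of pseudo strict $H$-differentiability with (a), and hence with (b) and (c). The main obstacle is this last duality step, since one must justify both the continuity of polars on closed convex cones and the interchange of the set-valued $\slimsup$ with polar-cone formation.
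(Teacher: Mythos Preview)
Your proposal is correct and follows essentially the same route as the paper: invoke Theorem \ref{thm:FD-gen-derv} in its convexified form, apply Lemma \ref{lem:normals-come-in} to pass to half-space processes $\tilde{G}_{(u,v)}$, and then identify the union of the polars $[\gph(G)]^{0}$ with $N_{\scriptsize\gph(S)}(\bar{x},\bar{y})$ via the continuity of the polar-cone map on closed convex cones (which the paper cites as \cite[Corollary 11.35(b)]{RW98}) together with the $\slimsup$/$\limsup$ relation \eqref{eq:limsup-set}. The only cosmetic differences are that you establish (a)$\Leftrightarrow$(b)$\Leftrightarrow$(c) up front rather than at the end, and you argue the union identity by the two containments and \cite[Theorem 4.18]{RW98} instead of quoting \cite[Proposition 4.19]{RW98} directly; both amount to the same thing.
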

\begin{proof}
It is clear that (a) is equivalent to (b), and that (b) is equivalent
to (c) by elementary properties of convexity. To simplify notation,
let $\{\gph(G_{i})\}_{i\in I}=\slimsup_{(x,y)\xrightarrow[\scriptsize\gph(S)]{}(\bar{x},\bar{y})}\cl\,\co\: T_{\scriptsize\gph(S)}(x,y)$.
We also use the definition of $\tilde{G}_{(u,v)}:\mathbb{R}^{n}\rightrightarrows\mathbb{R}^{m}$
made in the statement of Lemma \ref{lem:normals-come-in}, and the
following equivalent formulation of (a):
\begin{enumerate}
\item [(a$^{\prime}$)]For all $p\in\mathbb{R}^{n}\backslash\{0\}$ and
$(u,v)\in N_{\scriptsize\gph(S)}(\bar{x},\bar{y})$, there exists
$y\in H(p)$ s.t. $\left\langle (u,v),(p,y)\right\rangle \geq0$. 
\end{enumerate}
Using \cite[Corollary 11.35(b)]{RW98} (which states that a sequence
of closed convex cones converges if and only if the corresponding
sequence of polar cones converges), we have 
\begin{align*}
\slimsup_{(x,y)\xrightarrow[\scriptsize\gph(S)]{}(\bar{x},\bar{y})}\hat{N}_{\scriptsize\gph(S)}(x,y) & =\slimsup_{(x,y)\xrightarrow[\scriptsize\gph(S)]{}(\bar{x},\bar{y})}[\cl\,\co\, T_{\scriptsize\gph(S)}(x,y)]^{0}\\
 & =\{[\gph(G_{i})]^{0}\}_{i\in I}
\end{align*}
By the observation in \eqref{eq:limsup-set}, which recalls \cite[Proposition 4.19]{RW98},
we have 
\begin{align*}
N_{\scriptsize\gph(S)}(\bar{x},\bar{y}) & =\limsup_{(x,y)\xrightarrow[\scriptsize\gph(S)]{}(\bar{x},\bar{y})}\hat{N}_{\scriptsize\gph(S)}(x,y)=\bigcup_{i\in I}[\gph(G_{i})]^{0}.
\end{align*}
By Lemma \ref{lem:normals-come-in}, 
\begin{align*}
 & G(p)\cap[-H(-p)]\neq\emptyset\mbox{ for all }p\in\mathbb{R}^{n}\backslash\{0\}\\
\mbox{if and only if } & \tilde{G}_{(u,v)}(p)\cap[-H(-p)]\neq\emptyset\mbox{ for all }p\in\mathbb{R}^{n}\backslash\{0\}\mbox{ and }(u,v)\in[\gph(G)]^{0}.
\end{align*}
By Theorem \ref{thm:FD-gen-derv}, $S$ is pseudo strictly $H$-differentiable
at $(\bar{x},\bar{y})$ if and only if 
\begin{align*}
 & G_{i}(p)\cap[-H(-p)]\neq\emptyset\mbox{ for all }p\in\mathbb{R}^{n}\backslash\{0\}\mbox{ and }i\in I,\\
\mbox{or equivalently, } & \tilde{G}_{(u,v)}(p)\cap[-H(-p)]\neq\emptyset\mbox{ for all }p\in\mathbb{R}^{n}\backslash\{0\}\mbox{ and }(u,v)\in\bigcup_{i\in I}[\gph(G_{i})]^{0}.
\end{align*}
We can substitute $\bigcup_{i\in I}[\gph(G_{i})]^{0}=N_{\scriptsize\gph(S)}(\bar{x},\bar{y})$
in the above formula. Unrolling the definition of $\tilde{G}_{(u,v)}$
gives: For all $p\in\mathbb{R}^{n}\backslash\{0\}$ and $(u,v)\in N_{\scriptsize\gph(S)}(\bar{x},\bar{y})$,
there exists some $y\in[-H(-p)]$ such that $\left\langle (u,v),(p,y)\right\rangle \leq0$,
which is easily seen to be condition (a$^{\prime}$). 
\end{proof}
Characterizing the generalized derivatives $H:\mathbb{R}^{n}\rightrightarrows\mathbb{R}^{m}$
in terms $N_{\scriptsize\gph(S)}(\bar{x},\bar{y})$ or $D^{*}S(\bar{x}\mid\bar{y})$
instead of the tangent cones not only enjoys a simpler statement,
it also enables one to use tools for normal cones that may not be
present for tangent cones. For example, estimates of the coderivatives
of the composition of two set-valued maps are more easily available
than corresponding results in terms of tangent cones. 

In the particular case of the Aubin property, we obtain the classical
Mordukhovich criterion.
\begin{cor}
\label{rem:classical-boris}(Mordukhovich criterion) Suppose $S:\mathbb{R}^{n}\rightrightarrows\mathbb{R}^{m}$
is osc, and $\bar{y}\in S(\bar{x})$. Then 
\[
\lip\: S(\bar{x}\mid\bar{y})=\|D^{*}S(\bar{x}\mid\bar{y})\|^{+}=\|\cl\,\co\, D^{*}S(\bar{x}\mid\bar{y})\|^{+}.
\]
\end{cor}
\begin{proof}
By Theorem \ref{thm:boris-crit}, $\lip\, S(\bar{x}\mid\bar{y})$
is the infimum of all $\kappa$ such that 
\[
\underset{y:\|y\|\leq\kappa\|p\|}{\min}\left\langle u,y\right\rangle \leq\underset{v\in D^{*}S(\bar{x}\mid\bar{y})(u)}{\min}\left\langle v,p\right\rangle \mbox{ for all }p\in\mathbb{R}^{n}\backslash\{0\}\mbox{ and }u\in\mathbb{R}^{m}.
\]
Now, 
\[
\underset{y:\|y\|\leq\kappa\|p\|}{\min}\left\langle u,y\right\rangle =-\kappa\|u\|\|p\|,
\]
so $\lip\, S(\bar{x}\mid\bar{y})$ is the infimum of all $\kappa$
such that 
\begin{align*}
 & \underset{v\in D^{*}S(\bar{x}\mid\bar{y})(u)}{\max}-\left\langle v,p\right\rangle \leq\kappa\|u\|\|p\|\mbox{ for all }p\in\mathbb{R}^{n}\backslash\{0\}\mbox{ and }u\in\mathbb{R}^{m},\\
\mbox{or } & \underset{v\in D^{*}S(\bar{x}\mid\bar{y})(u)}{\max}\|v\|\|p\|\leq\kappa\|u\|\|p\|\mbox{ for all }p\in\mathbb{R}^{n}\backslash\{0\}\mbox{ and }u\in\mathbb{R}^{m},\\
\mbox{or } & \underset{v\in D^{*}S(\bar{x}\mid\bar{y})(u)}{\max}\|v\|\leq\kappa\|u\|\mbox{ for all }u\in\mathbb{R}^{m}.
\end{align*}
The fact that $\lip\: S(\bar{x}\mid\bar{y})=\|D^{*}S(\bar{x}\mid\bar{y})\|^{+}$
follows easily. The other equality is similar.  
\end{proof}
Theorem \ref{thm:boris-crit}(c) shows that $\cl\,\co\: D^{*}S(\bar{x}\mid\bar{y})$
characterizes all possible generalized derivatives $H:\mathbb{R}^{n}\rightrightarrows\mathbb{R}^{m}$.
As Theorem \ref{thm:co-coderv-best} shows, the reverse holds as well.
\begin{lem}
(Outer semicontinuity of convexified maps)\label{lem:osc-co} Suppose
$D:\mathbb{R}^{m}\rightrightarrows\mathbb{R}^{n}$ is osc, and is
locally bounded at $\bar{x}$. Then the map $\co\: D:\mathbb{R}^{m}\rightrightarrows\mathbb{R}^{n}$,
which maps $x$ to the convex hull of $D(x)$, is osc at $\bar{x}$.\end{lem}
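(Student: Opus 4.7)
The plan is to prove this by a direct sequential argument, invoking Carathéodory's theorem to reduce convex hulls to finite sums. I would first recall the sequential characterization of outer semicontinuity in finite dimensions together with local boundedness: it is equivalent to showing $\limsup_{x \to \bar{x}} (\co\, D)(x) \subset \co\, D(\bar{x})$, i.e., whenever $x_k \to \bar{x}$ and $y_k \in \co\, D(x_k)$ with $y_k \to y$, we must have $y \in \co\, D(\bar{x})$. (Since $D$ is locally bounded, so is $\co\, D$, and thus the full osc statement of the preceding definition reduces to this sequential condition.)

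Next, I would apply Carathéodory's theorem: for each $k$, since $\co\, D(x_k) \subset \mathbb{R}^n$, write
\[
y_k = \sum_{i=0}^{n} \lambda_i^k z_i^k, \quad z_i^k \in D(x_k), \quad \lambda_i^k \geq 0, \quad \sum_{i=0}^{n} \lambda_i^k = 1.
\]
Local boundedness of $D$ at $\bar{x}$ gives a constant $M$ and a neighborhood $V_0$ of $\bar{x}$ with $D(x) \subset M\mathbb{B}$ for all $x \in V_0$, so for large $k$ each $z_i^k$ lies in $M\mathbb{B}$. Together with $\lambda_i^k \in [0,1]$, a standard diagonal-subsequence extraction yields $z_i^k \to z_i$ and $\lambda_i^k \to \lambda_i$ for each $i = 0, \ldots, n$.

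Finally, I would invoke outer semicontinuity of $D$ at $\bar{x}$: since $z_i^k \in D(x_k)$ with $(x_k, z_i^k) \to (\bar{x}, z_i)$, we get $z_i \in D(\bar{x})$ (using that $D(\bar{x})$ is closed, which follows from osc). Passing to the limit in the convex combination gives
\[
y = \lim_{k \to \infty} \sum_{i=0}^{n} \lambda_i^k z_i^k = \sum_{i=0}^{n} \lambda_i z_i \in \co\, D(\bar{x}),
\]
since $\sum_i \lambda_i = 1$ and each $z_i \in D(\bar{x})$. This establishes the inclusion $\limsup_{x\to\bar{x}} (\co\, D)(x) \subset \co\, D(\bar{x})$, which is osc of $\co\, D$ at $\bar{x}$.

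The main obstacle is essentially a bookkeeping one: ensuring that Carathéodory's bound on the number of terms is uniform in $k$ so that a single subsequence extraction handles all the $(z_i^k, \lambda_i^k)$ simultaneously. The finite-dimensionality of the range $\mathbb{R}^n$ is exactly what makes this work; there is no difficulty beyond that.
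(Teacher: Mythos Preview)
Your proof is correct and follows essentially the same approach as the paper: reduce to the sequential characterization of osc, apply Carath\'eodory's theorem to get a fixed number $n+1$ of points in $D(x_k)$, use local boundedness to extract convergent subsequences, and invoke osc of $D$ to place the limits in $D(\bar{x})$. Your version is slightly more explicit in tracking the convex coefficients $\lambda_i^k$, whereas the paper simply notes that the limit lies in the convex hull of the limit points; otherwise the arguments are the same.
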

\begin{proof}
It suffices to show that if $y_{i}\in\co\: D(x_{i})$, $y_{i}\to\bar{y}$
and $x_{i}\to\bar{x}$, then $\bar{y}\in\co\: D(\bar{x})$. By Caratheodory's
theorem, we can write $y_{i}$ as a convex combination of $z_{i,1}$,
$z_{i,2}$, ..., $z_{i,n+1}$. By taking a subsequence if necessary,
we can assume that $z_{i,1}$ converges to some $\bar{z}_{1}$ in
$D(\bar{x})$. Doing this $n+1$ times allows us to assume that for
any $j\in\{1,\dots,n+1\}$, $\{z_{i,j}\}_{i=1}^{\infty}$ converges
to some $\bar{z}_{j}\in D(\bar{x})$. It is elementary that $\bar{y}$
is in the convex hull of $\{\bar{z}_{1},\dots,\bar{z}_{n+1}\}$, which
gives $\bar{y}\in\co\: D(\bar{x})$ as needed.
\end{proof}
For $D:\mathbb{R}^{m}\rightrightarrows\mathbb{R}^{n}$ such that $D$
is positively homogeneous and $\|D\|^{+}$ is finite, define $\mathcal{H}(D)$
by
\begin{eqnarray}
\mathcal{H}(D) & := & \{H:\mathbb{R}^{n}\rightrightarrows\mathbb{R}^{m}:H\mbox{ is a prefan,}\nonumber \\
 &  & \qquad\mbox{and for all }p\in\mathbb{R}^{n}\backslash\{0\}\mbox{ and }u\in\mathbb{R}^{m},\label{eq:def-script-H}\\
 &  & \qquad\underset{y\in H(p)}{\min}\left\langle u,y\right\rangle \leq\underset{v\in\scriptsize{\cl\,\co\,}D(u)}{\min}\left\langle v,p\right\rangle \}.\nonumber 
\end{eqnarray}
Suppose $S:\mathbb{R}^{n}\rightrightarrows\mathbb{R}^{m}$ is locally
closed at $(\bar{x},\bar{y})$. By Theorem \ref{thm:boris-crit},
$\mathcal{H}(D^{*}S(\bar{x}\mid\bar{y}))$ is the set of all possible
$H$ with the relevant properties such that $S$ is pseudo strictly
$H$-differentiable at $(\bar{x},\bar{y})$. We now state another
lemma. 
\begin{lem}
\label{lem:conv-coderv-gen-derv}(Strict reverse inclusion property
of $\mathcal{H}(\cdot)$) Suppose $D_{i}:\mathbb{R}^{m}\rightrightarrows\mathbb{R}^{n}$
such that $D_{i}$ is positively homogeneous, osc, and $\|D_{i}\|^{+}$
is finite for $i=1,2$. Then the following hold.
\begin{enumerate}
\item $\cl\,\co\, D_{1}\subset\cl\,\co\, D_{2}$ implies $\mathcal{H}(D_{1})\supset\mathcal{H}(D_{2})$.
\item $\cl\,\co\, D_{1}\neq\cl\,\co\, D_{2}$ implies $\mathcal{H}(D_{1})\neq\mathcal{H}(D_{2})$.
\item $\cl\,\co\, D_{1}\subsetneq\cl\,\co\, D_{2}$ implies $\mathcal{H}(D_{1})\supsetneq\mathcal{H}(D_{2})$.
\item $\mathcal{H}(D_{1})=\mathcal{H}(D_{2})$ implies $\cl\,\co\, D_{1}=\cl\,\co\, D_{2}$.
\end{enumerate}
\end{lem}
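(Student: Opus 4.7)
The four assertions reduce to a monotonicity fact (part (1)) and a strict separation argument (part (2)); (3) is then (1) combined with (2), and (4) is the contrapositive of (2). For (1), I would note that $\cl\,\co\,D_1 \subset \cl\,\co\,D_2$ implies $\cl\,\co\,D_1(u) \subset \cl\,\co\,D_2(u)$ for every $u \in \mathbb{R}^m$, whence
\[ \min_{v \in \cl\,\co\,D_1(u)} \langle v, p\rangle \;\geq\; \min_{v \in \cl\,\co\,D_2(u)} \langle v, p\rangle, \]
making the defining inequality of $\mathcal{H}(D_2)$ more demanding than that of $\mathcal{H}(D_1)$, so $\mathcal{H}(D_2) \subset \mathcal{H}(D_1)$.

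For (2), suppose $\cl\,\co\,D_1 \neq \cl\,\co\,D_2$. Exchanging indices if necessary, I may pick $(u^*, v^*) \in \gph(\cl\,\co\,D_1) \setminus \gph(\cl\,\co\,D_2)$. Since Lemma \ref{lem:osc-co} together with the hypotheses guarantee that $\gph(\cl\,\co\,D_2)$ is a closed convex cone in $\mathbb{R}^m \times \mathbb{R}^n$, the cone-separation theorem produces $(a, b) \in \mathbb{R}^m \times \mathbb{R}^n$ with $\langle a, u\rangle + \langle b, v\rangle \leq 0$ on $\gph(\cl\,\co\,D_2)$ and $\langle a, u^*\rangle + \langle b, v^*\rangle > 0$. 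Writing $f_D(u, p) := \min_{v \in \cl\,\co\,D(u)} \langle v, p\rangle$ and setting $p^* := -b$, this produces the strict inequality
\[ f_{D_1}(u^*, p^*) \;\leq\; \langle v^*, p^*\rangle \;<\; \langle a, u^*\rangle \;\leq\; f_{D_2}(u^*, p^*). \]
To convert this pointwise gap into a witnessing prefan, the plan is to take $H$ to be the ``maximal'' prefan recovering $f_{D_2}$ as its infimal support, namely
\[ H(p) \;:=\; \bigl\{y \in \mathbb{R}^m : \langle u, y\rangle \geq f_{D_2}(u, p) \text{ for all } u \in \mathrm{dom}(\cl\,\co\,D_2)\bigr\} \,\cap\, M\|p\|\mathbb{B}, \]
with $M > \|D_2\|^+$. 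A Hahn--Banach-style supergradient argument for the finite concave positively homogeneous function $f_{D_2}(\cdot, p)$ shows that $H(p)$ is nonempty, and the two-sided Lipschitz bound $\|y\|\leq \|D_2\|^+\|p\|$ enjoyed by supergradients at $0$ (valid since $f_{D_2}(\cdot, p)$ is $\|D_2\|^+\|p\|$-Lipschitz) implies that the truncation by $M\|p\|\mathbb{B}$ is inactive. Thus $H$ is a prefan with $\min_{y\in H(p)}\langle u, y\rangle = f_{D_2}(u, p)$ on $\mathrm{dom}(\cl\,\co\,D_2)$ and trivially $\leq +\infty = f_{D_2}(u, p)$ outside it, so $H \in \mathcal{H}(D_2)$, while the gap forces $\min_{y \in H(p^*)} \langle u^*, y\rangle > f_{D_1}(u^*, p^*)$, giving $H \notin \mathcal{H}(D_1)$.

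The main obstacle I anticipate is the boundary case in which $u^* \notin \mathrm{dom}(\cl\,\co\,D_2)$: the identification $\phi_H(u^*, p^*) = f_{D_2}(u^*, p^*) = +\infty$ then breaks down and the witness above need not separate at $(u^*, p^*)$. I plan to resolve this either by interchanging the roles of $D_1$ and $D_2$ when the exterior point sits more naturally in the dual configuration, or by appending to $H(p)$ the auxiliary half-space $\{y : \langle u^*, y\rangle \geq \langle a, u^*\rangle\, \langle c, p\rangle\}$ (for any $c$ with $\langle c, p^*\rangle = 1$) arising from the separator, then verifying by Lagrangian duality on the resulting convex compact set that $H \in \mathcal{H}(D_2)$ persists for $M$ chosen sufficiently large relative to $(a, b)$. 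Once these domain pathologies are settled, assertion (3) is immediate from (1) combined with (2), and (4) is the contrapositive of (2).
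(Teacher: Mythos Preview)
Your argument for (2) rests on the assertion that $\gph(\cl\,\co\,D_2)$ is a closed convex cone in $\mathbb{R}^m\times\mathbb{R}^n$, but this is false in general: the operation $\cl\,\co$ here is applied \emph{pointwise} (so $(\cl\,\co\,D_2)(u)=\cl\,\co(D_2(u))$), and pointwise convexification does not convexify the graph. For a concrete counterexample satisfying all the hypotheses, take $m=n=1$ and $D_2(u)=\{|u|\}$: this map is positively homogeneous, osc, has $\|D_2\|^+=1$, and is already convex-valued, yet its graph is the V-shape $\{(u,|u|):u\in\mathbb{R}\}$, which is not convex. Consequently your cone-separation in the product space is unavailable, and the same example shows that $f_{D_2}(\cdot,p)=\min_{v\in\cl\,\co\,D_2(\cdot)}\langle v,p\rangle$ need not be concave (for $p=1$ it equals $|u|$), so the Hahn--Banach supergradient construction of $H$ collapses as well. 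The domain pathology you flag at the end is thus not the main obstacle; the core mechanism already fails.

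The paper circumvents this by separating only in the \emph{value} space $\mathbb{R}^n$: with $\bar v\in D_2(\bar u)\setminus D_1(\bar u)$ (after reducing to the convex-valued case), one finds $\bar w$ strictly separating $\bar v$ from the compact convex set $D_1(\bar u)$, and then uses outer semicontinuity of $D_1$ to propagate the half-space inclusion to a full neighbourhood $\mathbb{B}_\epsilon(\bar u)$. The witnessing prefan $H$ is then built by hand: it equals the Lipschitz ball $\|D_1\|^+\|p\|\mathbb{B}$ for every $p$ off the ray through $\bar w$, and on that single ray it is a carefully chosen truncated cylinder aligned with $\bar u$. The verification that $H\in\mathcal{H}(D_1)\setminus\mathcal{H}(D_2)$ is an explicit case analysis over the angle between $u$ and $\bar u$. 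Your reduction of (1), (3), (4) to (2) is fine and matches the paper; the issue is entirely in the separation step for (2).
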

\begin{proof}
Property (1) follows easily from the definitions, property (4) is
equivalent to property (2), and property (3) follows easily from property
(1) and (2). We thus concentrate on proving property (4). We shall
assume throughout that $D_{1}$ and $D_{2}$ are convex-valued to
cut down on notation. 

Assume $\mathcal{H}(D_{1})=\mathcal{H}(D_{2})$. We first prove that
$\|D_{1}\|^{+}=\|D_{2}\|^{+}$. Let $\lambda$ be such that the set-valued
map $H_{\lambda}:\mathbb{R}^{n}\rightrightarrows\mathbb{R}^{m}$ defined
by $H_{\lambda}(p):=\lambda\|p\|\mathbb{B}$ lies in $\mathcal{H}(D_{1})$.
We have 
\begin{align*}
\min_{y\in H_{\lambda}(p)}\left\langle u,y\right\rangle  & \leq\min_{v\in D(u)}\left\langle v,p\right\rangle \mbox{ for all }u\in\mathbb{R}^{n}\mbox{ and }p\in\mathbb{R}^{n}\backslash\{0\}\\
\iff-\lambda\|p\|\|u\| & \leq\min_{v\in D(u)}\left\langle v,p\right\rangle \mbox{ for all }u\in\mathbb{R}^{n}\mbox{ and }p\in\mathbb{R}^{n}\backslash\{0\}\\
\iff\max_{v\in D(u)}\left\langle v,-p\right\rangle  & \leq\lambda\|p\|\|u\|\mbox{ for all }u\in\mathbb{R}^{n}\mbox{ and }p\in\mathbb{R}^{n}\backslash\{0\}\\
\iff\max_{v\in D(u)}\|v\| & \leq\lambda\|u\|\mbox{ for all }u\in\mathbb{R}^{n}.
\end{align*}
The above implies that $\|D_{1}\|^{+}=\inf\{\lambda\mid H_{\lambda}\in\mathcal{H}(D_{1})\}$.
Since $\mathcal{H}(D_{1})=\mathcal{H}(D_{2})$, we have $\|D_{1}\|^{+}=\|D_{2}\|^{+}$
as needed. 

Suppose on the contrary that $D_{1}\not\equiv D_{2}$. There must
be some $\bar{u}$ and $\bar{v}$ such that without loss of generality,
$\bar{v}\notin D_{1}(\bar{u})$ but $\bar{v}\in D_{2}(\bar{u})$.
Since $D_{1}(\bar{u})$ is convex, there is some $\bar{w}\neq0$ and
$\alpha\in\mathbb{R}$ such that 
\begin{align*}
 & \left\langle \bar{w},\bar{v}\right\rangle <\alpha\\
\mbox{and } & \left\langle \bar{w},v\right\rangle >\alpha\mbox{ for all }v\in D_{1}(\bar{u}).
\end{align*}
By the outer semicontinuity of $D_{1}$, there is a neighborhood $\mathbb{B}_{\epsilon}(\bar{u})$
of $\bar{u}$ such that $D_{1}(u)\subset\{v\mid\left\langle \bar{w},v\right\rangle >\alpha\}$
for all $u\in\mathbb{B}_{\epsilon}(\bar{u})$. We can suppose $0<\epsilon<2$,
and let the variable $\bar{\theta}>0$ be such that $2\sin(\bar{\theta}/2)=\epsilon$.

We can assume that $\|\bar{w}\|=\|\bar{u}\|=1$. Define $H:\mathbb{R}^{n}\rightrightarrows\mathbb{R}^{m}$
to be 
\begin{align*}
H(x) & :=\begin{cases}
\left\{ y\left|\begin{array}{l}
\frac{1}{2}[\alpha+\left\langle \bar{v},\bar{w}\right\rangle ]\leq\left\langle \bar{u},y\right\rangle \leq\|D_{1}\|^{+},\\
\|y\|^{2}-\left\langle \bar{u},y\right\rangle ^{2}\leq L^{2}
\end{array}\right.\right\}  & \mbox{if }x=\bar{w}\\
\lambda H(\bar{w}) & \mbox{if }x=\lambda\bar{w}\mbox{ for some }\lambda>0\\
\|D_{1}\|^{+}\|x\|\mathbb{B} & \mbox{otherwise},
\end{cases}
\end{align*}
where $L\geq\frac{1}{\sin\bar{\theta}}(\|D_{1}\|^{+}+\frac{1}{2}[\alpha+\left\langle \bar{v},\bar{w}\right\rangle ]\cos\bar{\theta})$.
See Figure \ref{fig:co-coderv-pf} for an illustration of $H(\bar{w})$
when $\alpha>0$.

We now show that $H$ is convex-valued and compact-valued. It only
suffices to show that $H(\bar{w})$ is convex and compact. The set
\[
\left\{ y\mid\frac{1}{2}[\alpha+\left\langle \bar{v},\bar{w}\right\rangle ]\leq\left\langle \bar{u},y\right\rangle \leq\|D_{1}\|^{+}\right\} 
\]
is the intersection of two half spaces and is thus convex. The map
$y\mapsto\|y\|^{2}-\left\langle \bar{u},y\right\rangle ^{2}$ is a
convex quadratic function, so $\{y\mid\|y\|^{2}-\left\langle \bar{u},y\right\rangle ^{2}\leq L^{2}\}$
is convex. To check compactness, we note that 
\[
\|y\|^{2}\leq L^{2}+\left\langle \bar{u},y\right\rangle ^{2}\leq L^{2}+\max\left(\frac{1}{2}[\alpha+\left\langle \bar{v},\bar{w}\right\rangle ],\|D_{1}\|^{+}\right)^{2},
\]
so $H(\bar{w})$ is compact. In addition, it is clear that $H$ is
positively homogeneous and $\|H\|^{+}$ is finite. 

Once the claim below is proved, we will establish the result at hand.

\textbf{Claim:} The map $H$ satisfies $H\in\mathcal{H}(D_{1})$,
but $H\notin\mathcal{H}(D_{2})$. 

Suppose on the contrary $H\in\mathcal{H}(D_{2})$. Since $\bar{v}\in D_{2}(\bar{u})$,
we can find a $y\in H(\bar{w})$ such that $\left\langle \bar{u},y\right\rangle \leq\left\langle \bar{v},\bar{w}\right\rangle $.
But this is not the case since for all $y\in H(\bar{w})$, we have
$\left\langle \bar{u},y\right\rangle \geq\frac{1}{2}[\alpha+\left\langle \bar{v},\bar{w}\right\rangle ]>\left\langle \bar{v},\bar{w}\right\rangle $.

Next, we show that $H\in\mathcal{H}(D_{1})$. We need to check that
for all $p\in\mathbb{R}^{n}\backslash\{0\}$ and $(u,v)\in\gph(D_{1})$,
we can find a $y\in H(p)$ such that $\left\langle u,y\right\rangle \leq\left\langle v,p\right\rangle $.
Since $D_{1}$ is positively homogeneous and $\|D_{1}\|^{+}$ is finite,
we can check only $(u,v)\in\gph(D_{1})$ such that $\|u\|=1$. There
is no need to check for the case $u=0$ and $v\neq0$ because in this
case $\|D_{1}\|^{+}=\infty$. We can further assume that $\|p\|=1$. 

We see that $p\neq\bar{w}$ poses no problems because
\begin{eqnarray*}
\min_{y\in H(p)}\left\langle u,y\right\rangle  & = & -\|u\|\|p\|\|D_{1}\|^{+}\\
 & \leq & -\|p\|\|v\|\\
 & \leq & \left\langle v,p\right\rangle .
\end{eqnarray*}

Let $\partial\mathbb{B}:=\{u\in\mathbb{R}^{m}\mid\|u\|=1\}$. By our
earlier discussion on the outer semicontinuity of $D_{1}$ and the
fact that $\|D_{1}\|^{+}$ is finite, we have
\begin{align*}
[\partial\mathbb{B}\times\mathbb{R}^{n}]\cap\gph(D_{1})\subset & \big([\partial\mathbb{B}\cap\mathbb{B}_{2\sin(\bar{\theta}/2)}(\bar{u})]\times[\|D_{1}\|^{+}\mathbb{B}\cap\{v\mid\left\langle \bar{w},v\right\rangle \geq\alpha\}]\big)\\
 & \cup\big([\partial\mathbb{B}\backslash\mathbb{B}_{2\sin(\bar{\theta}/2)}(\bar{u})]\times\|D_{1}\|^{+}\mathbb{B}\big).
\end{align*}
The possibilities for $(u,v)\in[\partial\mathbb{B}\times\mathbb{R}^{n}]\cap\gph(D_{1})$
are covered in the next two cases.

\begin{figure}
\includegraphics[scale=0.5]{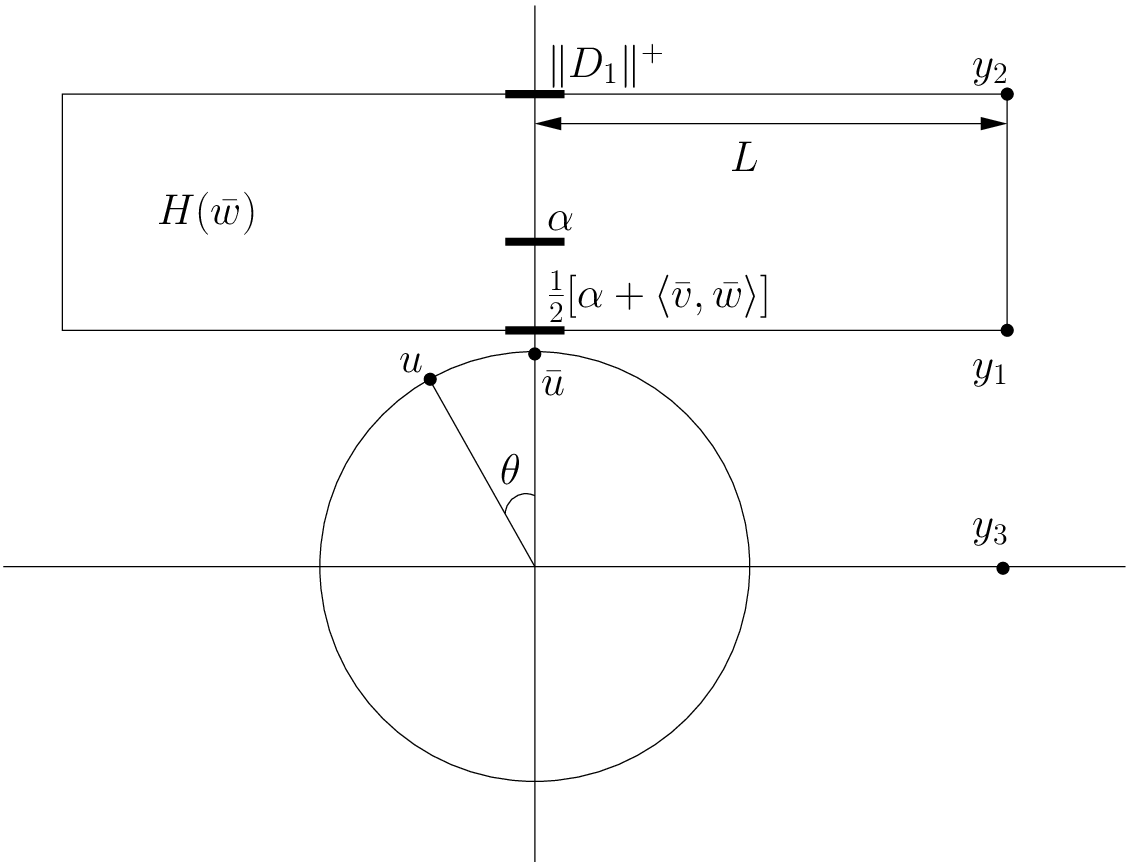}

\caption{\label{fig:co-coderv-pf}In the proof of Lemma \ref{lem:conv-coderv-gen-derv},
we prove that $H:\mathbb{R}^{n}\rightrightarrows\mathbb{R}^{m}$ satisfies
$H\in\mathcal{H}(D_{1})$ for the map $D_{1}:\mathbb{R}^{m}\rightrightarrows\mathbb{R}^{n}$.
The figure shows the distinct features in the two dimensional subspace
in $\mathbb{R}^{m}$ containing $0$, $u$ and $\bar{u}$. The points
$y_{1}$, $y_{2}$ and $y_{3}$ as marked will be used in the proof
of case 2 of Lemma \ref{lem:conv-coderv-gen-derv}.}

\end{figure}

\textbf{Case 1}: If $(u,v)\in[\partial\mathbb{B}\cap\mathbb{B}_{2\sin(\bar{\theta}/2)}(\bar{u})]\times[\|D_{1}\|^{+}\mathbb{B}\cap\{v\mid\left\langle \bar{w},v\right\rangle \geq\alpha\}]$,
then we can find $y\in H(\bar{w})$ s.t. $\left\langle u,y\right\rangle \leq\left\langle v,\bar{w}\right\rangle $.

This case gives $u\in[\partial\mathbb{B}\cap\mathbb{B}_{2\sin(\bar{\theta}/2)}(\bar{u})]$.
Figure \ref{fig:co-coderv-pf} shows the two dimensional subspace
in $\mathbb{R}^{n}$ containing the points $0$, $u$ and $\bar{u}$
in the case when $\alpha>0$. (If $u=\bar{u}$, just take any subspace
passing through $0$ and $\bar{u}$.) The intersection of $H(\bar{w})$
with the subspace is also illustrated. The condition that $u\in\partial\mathbb{B}\cap\mathbb{B}_{2\sin(\bar{\theta}/2)}(\bar{u})$
implies that the angle $\theta$ in Figure \ref{fig:co-coderv-pf}
is in the interval $[0,\bar{\theta}]$. The point $y_{1}$ is formally
defined as the point lying in the two dimensional subspace spanned
by $u$ and $\bar{u}$, and satisfies $\left\langle \bar{u},y_{1}\right\rangle =\frac{1}{2}[\alpha+\left\langle \bar{v},\bar{w}\right\rangle ]$
and $\|y_{1}\|^{2}-\left\langle \bar{u},y_{1}\right\rangle ^{2}=L^{2}$.
By restricting the maximum angle $\bar{\theta}$ if necessary when
$\alpha<0$ and using elementary geometry, we have 
\begin{eqnarray*}
\left\langle u,y_{1}\right\rangle  & \leq & \left\langle \bar{u},y_{1}\right\rangle \\
 & = & \frac{1}{2}[\alpha+\left\langle \bar{v},\bar{w}\right\rangle ]\\
 & < & \alpha\\
 & \leq & \left\langle v,\bar{w}\right\rangle ,
\end{eqnarray*}
which gives us what we need. 

\textbf{Case 2}: If $(u,v)\in[\partial\mathbb{B}\backslash\mathbb{B}_{2\sin(\bar{\theta}/2)}(\bar{u})]\times\|D_{1}\|^{+}\mathbb{B}$,
then we can find $y\in H(\bar{w})$ s.t. $\left\langle u,y\right\rangle \leq\left\langle v,\bar{w}\right\rangle $.

In this case, we need to show that for all $(u,v)$ given, we can
find $y\in H(\bar{w})$ such that $\left\langle u,y\right\rangle \leq-\|D_{1}\|^{+}$.
The fact that $-\|D_{1}\|^{+}\leq\left\langle v,\bar{w}\right\rangle $
(which comes from $\|v\|\leq\|D_{1}\|^{+}\|u\|=\|D_{1}\|^{+}$) will
give us what we need. Once again, see Figure \ref{fig:co-coderv-pf}.
We split this case into two subcases.

\textbf{Subcase 2a: $[\alpha+\left\langle \bar{v},\bar{w}\right\rangle ]\geq0$. }

For the choice of $y_{1}$, we have 
\[
\left\langle u,y_{1}\right\rangle =-L\sin\theta+\frac{1}{2}[\alpha+\left\langle \bar{v},\bar{w}\right\rangle ]\cos\theta.
\]
We first consider $\theta\in[\bar{\theta},\pi/2]$. The RHS of the
above attains its maximum when $\theta=\bar{\theta}$. The condition
$L\geq\frac{1}{\sin\bar{\theta}}(\|D_{1}\|^{+}+\frac{1}{2}[\alpha+\left\langle \bar{v},\bar{w}\right\rangle ]\cos\bar{\theta})$
implies that $\left\langle u,y_{1}\right\rangle \leq-\|D_{1}\|^{+}$
as needed.

We now treat the case where $\theta\in[\pi/2,\pi]$. The point $y_{2}$
is defined similarly as in $y_{1}$, except that $\left\langle \bar{u},y_{2}\right\rangle =\|D_{1}\|^{+}$.
We have
\[
\left\langle u,y_{2}\right\rangle =-L\sin\theta+\|D_{1}\|^{+}\cos\theta.
\]
From Figure \ref{fig:co-coderv-pf}, we can see that the RHS of the
above attains its maximum when $\theta=\pi$, which gives $\left\langle u,y_{2}\right\rangle \leq-\|D_{1}\|^{+}$
as needed.

\textbf{Subcase 2b: $[\alpha+\left\langle \bar{v},\bar{w}\right\rangle ]<0$.}

Repeat the arguments for when $\theta\in[\bar{\theta},\pi/2]$, but
replace all occurrences of $y_{1}$ by $y_{3}$ as marked in Figure
\ref{fig:co-coderv-pf}. (The point $y_{3}$ satisfying $\left\langle \bar{u},y_{3}\right\rangle =0$
will lie in $H(\bar{w})$.) The case when $\theta\in[\pi/2,\pi]$
is also similar. This concludes the proof of the claim, and establishes
(4).
\end{proof}
With the above lemma, we state a theorem on the relationship between
convexified coderivatives and the generalized derivatives.
\begin{thm}
\label{thm:co-coderv-best}(Convexified coderivatives from generalized
derivatives) Suppose $S:\mathbb{R}^{n}\rightrightarrows\mathbb{R}^{m}$
is locally closed at $(\bar{x},\bar{y})\in\gph(S)$ and has the Aubin
property there. Then the convexified coderivative $\cl\,\co\: D^{*}S(\bar{x}\mid\bar{y}):\mathbb{R}^{m}\rightrightarrows\mathbb{R}^{n}$
is uniquely determined by the set of all prefans $H:\mathbb{R}^{n}\rightrightarrows\mathbb{R}^{m}$
such that $S$ is pseudo strictly $H$-differentiable at $(\bar{x},\bar{y})$.\end{thm}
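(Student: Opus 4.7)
The plan is to recognize that this theorem is essentially a direct corollary of Lemma \ref{lem:conv-coderv-gen-derv}(4) once Theorem \ref{thm:boris-crit} is invoked to translate from the set of compatible prefans to the quantity $\mathcal{H}(D^{*}S(\bar{x}\mid\bar{y}))$. The statement to be verified is the following: whenever $S_{1},S_{2}:\mathbb{R}^{n}\rightrightarrows\mathbb{R}^{m}$ are locally closed at $(\bar{x},\bar{y})$ with the Aubin property there, and the set of prefans $H$ for which $S_{i}$ is pseudo strictly $H$-differentiable at $(\bar{x},\bar{y})$ is the same for $i=1,2$, then $\cl\,\co\,D^{*}S_{1}(\bar{x}\mid\bar{y})=\cl\,\co\,D^{*}S_{2}(\bar{x}\mid\bar{y})$.

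First, by Theorem \ref{thm:boris-crit}(c) (in the equivalent reformulation via $\min_{y\in H(p)}\langle u,y\rangle\leq\min_{v\in\cl\,\co\,D^{*}S(\bar{x}\mid\bar{y})(u)}\langle v,p\rangle$), the collection of prefans $H$ under which $S$ is pseudo strictly $H$-differentiable at $(\bar{x},\bar{y})$ is exactly $\mathcal{H}(D^{*}S(\bar{x}\mid\bar{y}))$. Hence the hypothesis $\mathcal{H}^{*}(S_{1})=\mathcal{H}^{*}(S_{2})$ translates into $\mathcal{H}(D^{*}S_{1}(\bar{x}\mid\bar{y}))=\mathcal{H}(D^{*}S_{2}(\bar{x}\mid\bar{y}))$.

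Second, I would verify that the coderivatives $D_{i}:=D^{*}S_{i}(\bar{x}\mid\bar{y}):\mathbb{R}^{m}\rightrightarrows\mathbb{R}^{n}$ satisfy the hypotheses of Lemma \ref{lem:conv-coderv-gen-derv}. Positive homogeneity follows from $\gph(D_{i})$ being (a rotated copy of) the cone $N_{\scriptsize\gph(S_{i})}(\bar{x},\bar{y})$; outer semicontinuity at the origin is part of the definition of the limiting normal cone as a $\limsup$ of regular normals; and the Aubin property together with the classical Mordukhovich criterion recalled in Remark \ref{rem:classical-boris} gives $\|D_{i}\|^{+}=\lip\,S_{i}(\bar{x}\mid\bar{y})<\infty$.

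Third, applying Lemma \ref{lem:conv-coderv-gen-derv}(4) to $D_{1}$ and $D_{2}$ yields $\cl\,\co\,D_{1}=\cl\,\co\,D_{2}$, which is exactly the required uniqueness. The only real substance is contained in Lemma \ref{lem:conv-coderv-gen-derv}(4), whose proof (using the explicit separating prefan $H$ built from $\bar{u},\bar{v},\bar{w}$) is where the work lives; here I would merely need to make sure that the hypotheses of that lemma apply in the present setting, which is routine. So there is no substantive obstacle beyond checking these three standing hypotheses, and the proof itself can be kept very short.
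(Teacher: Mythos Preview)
Your proposal is correct and follows essentially the same route as the paper: identify the set of admissible prefans with $\mathcal{H}(D^{*}S(\bar{x}\mid\bar{y}))$ via Theorem \ref{thm:boris-crit}(c), verify that $D^{*}S(\bar{x}\mid\bar{y})$ is positively homogeneous, osc, and has finite outer norm (the last via the Mordukhovich criterion and the Aubin property), and then invoke Lemma \ref{lem:conv-coderv-gen-derv}(4). The only small refinement is that osc is needed globally, not just ``at the origin'' (it holds because the limiting normal cone, hence the graph of $D^{*}S(\bar{x}\mid\bar{y})$, is closed), and the paper additionally cites Lemma \ref{lem:osc-co} to pass osc to the convexified map, since the proof of Lemma \ref{lem:conv-coderv-gen-derv} tacitly works with the convex-valued versions.
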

\begin{proof}
The map $D^{*}S(\bar{x}\mid\bar{y})$ is osc, and by Lemma \ref{lem:osc-co},
so is $\cl\,\co\, D^{*}S(\bar{x}\mid\bar{y})$. Apply Lemma \ref{lem:conv-coderv-gen-derv}(4)
to get the result.
\end{proof}

\section{\label{sec:Appl}Applications}

We end this paper by discussing how our results can be applied to
study constraint mappings, to study generalized pseudo strict $H$-differentiability,
metric regularity and linear openness, and to estimate the convexified
limiting coderivative of a limit of set-valued maps.  

In Proposition \ref{pro:RW-E9.44} below, we study constraint mappings,
and shall only treat the case where $D$ is Clarke regular and apply
Corollary \ref{cor:regular-fd-case} to illustrate the spirit of our
results. While stronger conditions for the case where $D$ is not
Clarke regular can be deduced from the characterizations in Sections
\ref{sec:second} and \ref{sec:third-char}, the extra calculations
do not give additional insight.
\begin{prop}
\label{pro:RW-E9.44}(Constraint mappings, adapted from \cite[Example 9.44]{RW98})
Let $S(x)=F(x)-D$ for smooth $F:\mathbb{R}^{n}\to\mathbb{R}^{m}$
 and a closed set $D\subset\mathbb{R}^{m}$ that is Clarke regular
at every point. Suppose also that $(\bar{x},\bar{u})\in\gph(S)$.
Then $S^{-1}(u)$ consists of all $x$ satisfying the constraint system
$F(x)-u\in D$, with $u$ as a parameter.

Suppose $H:\mathbb{R}^{m}\rightrightarrows\mathbb{R}^{n}$ is a prefan
such that for all $p\in\mathbb{R}^{m}\backslash\{0\}$, there exists
$q\in-H(-p)$ such that $\nabla F(\bar{x})q-p\in T_{D}(F(\bar{x})-\bar{u})$.
Then $S^{-1}$ is pseudo strictly $H$-differentiable at $(\bar{u},\bar{x})$.\end{prop}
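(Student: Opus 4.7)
The plan is to apply Corollary \ref{cor:regular-fd-case}, so the two ingredients needed are (i) Clarke regularity of $\gph(S^{-1})$ at $(\bar{u},\bar{x})$, and (ii) the inclusion $DS^{-1}(\bar{u}\mid\bar{x})(p)\cap[-H(-p)]\neq\emptyset$ for every $p\in\mathbb{R}^{m}\backslash\{0\}$. The hypothesis of the proposition is tailored to deliver (ii) once we identify the graphical derivative, so the real work is (i) and the tangent-cone computation.

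First I would rewrite the graph as a preimage: $\gph(S^{-1})=\{(u,x)\mid F(x)-u\in D\}=\Phi^{-1}(D)$, where $\Phi:\mathbb{R}^{m}\times\mathbb{R}^{n}\to\mathbb{R}^{m}$ is the smooth map $\Phi(u,x):=F(x)-u$. The Jacobian $\nabla\Phi(\bar{u},\bar{x})=[-I,\nabla F(\bar{x})]$ has full row rank $m$, so $\Phi$ is a submersion at $(\bar{u},\bar{x})$. Standard preimage results for smooth constraint systems under a surjective derivative (e.g.\ the analogue of \cite[Example 6.7 / Exercise 6.32]{RW98} for tangent cones, together with preservation of Clarke regularity) then give
\[
T_{\scriptsize\gph(S^{-1})}(\bar{u},\bar{x})=\{(p,q)\mid\nabla F(\bar{x})q-p\in T_{D}(F(\bar{x})-\bar{u})\},
\]
and Clarke regularity of $D$ at $F(\bar{x})-\bar{u}$ transfers to Clarke regularity of $\gph(S^{-1})$ at $(\bar{u},\bar{x})$. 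This is the main technical step, and the point where submersiveness of $\Phi$ is essential.

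Reading off the graphical derivative from the tangent-cone formula gives
\[
DS^{-1}(\bar{u}\mid\bar{x})(p)=\{q\in\mathbb{R}^{n}\mid\nabla F(\bar{x})q-p\in T_{D}(F(\bar{x})-\bar{u})\},
\]
so the hypothesis of the proposition says precisely that for each $p\in\mathbb{R}^{m}\backslash\{0\}$ there is some $q\in-H(-p)$ belonging to $DS^{-1}(\bar{u}\mid\bar{x})(p)$, i.e.\
\[
DS^{-1}(\bar{u}\mid\bar{x})(p)\cap[-H(-p)]\neq\emptyset\quad\text{for all }p\in\mathbb{R}^{m}\backslash\{0\}.
\]

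Combining Clarke regularity of $\gph(S^{-1})$ with this nonemptiness condition and the assumption that $H$ is a prefan, Corollary \ref{cor:regular-fd-case} applies directly to $S^{-1}:\mathbb{R}^{m}\rightrightarrows\mathbb{R}^{n}$ at $(\bar{u},\bar{x})$ and yields that $S^{-1}$ is pseudo strictly $H$-differentiable there. The only nontrivial obstacle is verifying the Clarke regularity and tangent-cone transfer under $\Phi$; once that is in hand the rest is a direct translation of the hypothesis through the graphical derivative and an invocation of the corollary.
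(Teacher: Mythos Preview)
Your proof is correct and follows essentially the same route as the paper: represent $\gph(S^{-1})$ as the preimage of $D$ under the submersion $\Phi(u,x)=F(x)-u$, invoke \cite[Exercise 6.7]{RW98} to obtain both Clarke regularity and the tangent-cone formula, read off $DS^{-1}(\bar{u}\mid\bar{x})$, and apply Corollary~\ref{cor:regular-fd-case}. The only cosmetic difference is that the paper first computes for $\gph(S)$ and verifies Clarke regularity via $\hat{N}_{\scriptsize\gph(S)}=N_{\scriptsize\gph(S)}$ before passing to $\gph(S^{-1})$, whereas you work directly with $\gph(S^{-1})$ and cite the regularity transfer; both are the same argument.
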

\begin{proof}
The set $\gph(S)$ is specified by $F_{0}(x,u)\in D$ with $F_{0}(x,u)=F(x)-u$,
i.e., 
\[
(x,u)\in\gph(S)\mbox{ if and only if }F_{0}(x,u)\in D.
\]
The mapping $F_{0}:\mathbb{R}^{n}\times\mathbb{R}^{m}\to\mathbb{R}^{m}$
is smooth, and its Jacobian $\nabla F_{0}(\bar{x},\bar{u})=[\nabla F(\bar{x}),-I]$
has full rank $m$. Applying the rule in \cite[Exercise 6.7]{RW98},
we see that
\begin{eqnarray*}
\hat{N}_{\scriptsize\gph(S)}(\bar{x},\bar{u}) & = & \{(v,-y)\mid y\in\hat{N}_{D}(F(\bar{x})-\bar{u}),v=\nabla F(\bar{x})^{T}y\}\\
 & = & \{(v,-y)\mid y\in N_{D}(F(\bar{x})-\bar{u}),v=\nabla F(\bar{x})^{T}y\}\\
 & = & N_{\scriptsize\gph(S)}(\bar{x},\bar{u}).
\end{eqnarray*}
Therefore, $\gph(S)$ is Clarke regular at $(\bar{x},\bar{u})$. From
\cite[Exercise 6.7]{RW98} again, we see that 
\begin{align*}
T_{\scriptsize\gph(S)}(\bar{x},\bar{u}) & =\{(q,p)\in\mathbb{R}^{n}\times\mathbb{R}^{m}\mid\nabla F(\bar{x})q-p\in T_{D}(F(\bar{x})-\bar{u})\},\\
\mbox{so }T_{\scriptsize\gph(S^{-1})}(\bar{u},\bar{x}) & =\{(p,q)\in\mathbb{R}^{m}\times\mathbb{R}^{n}\mid\nabla F(\bar{x})q-p\in T_{D}(F(\bar{x})-\bar{u})\}.
\end{align*}
The formula for $T_{\scriptsize\gph(S^{-1})}$, together with Corollary
\ref{cor:regular-fd-case}, gives the conclusion needed.
\end{proof}
If the constraint qualification 
\begin{equation}
y\in N_{D}(F(\bar{x})-\bar{u}),\nabla F(\bar{x})^{T}y=0\mbox{ implies }y=0\label{eq:CQ}
\end{equation}
holds in Proposition \ref{pro:RW-E9.44}, then \cite[Exercise 9.44]{RW98}
states that $S^{-1}$ has the Aubin property with modulus 
\[
\max_{{y\in N_{D}(F(\bar{x})-\bar{u})\atop \|y\|=1}}\frac{1}{\|\nabla F(\bar{x})^{T}y\|},
\]
so an $H:\mathbb{R}^{m}\rightrightarrows\mathbb{R}^{n}$ satisfying
the stated conditions can be found. 

The case where $D=\{0\}^{r}\times\mathbb{R}_{-}^{m-r}$ in Proposition
\ref{pro:RW-E9.44} gives 
\[
S^{-1}(u):=\{x:F_{i}(x)=u_{i}\mbox{ for }i=1,\dots,r\mbox{ and }F_{i}(x)\leq u_{i}\mbox{ for }i=r+1,\dots,m\}.
\]
In this case, the constraint qualification \eqref{eq:CQ} is equivalent
to the Mangasarian-Fromovitz constraint qualification defined by the
existence of $w\in\mathbb{R}^{n}$ satisfying 
\begin{eqnarray*}
\nabla F_{i}(\bar{x})w & < & 0\mbox{ for all }i\in\{r+1,\dots,m\}\mbox{ s.t. }F_{i}(\bar{x})=0,\\
\mbox{ and }\nabla F_{i}(\bar{x})w & = & 0\mbox{ for all }i\in\{1,\dots,r\}.
\end{eqnarray*}
The corresponding conclusion in Proposition \ref{pro:RW-E9.44} can
be easily deduced.

Next, we remark that the Aubin property of the constraint mapping
$S^{-1}:\mathbb{R}^{m}\rightrightarrows\mathbb{R}^{n}$ at $(\bar{u},\bar{x})$
is also equivalently studied as the metric regularity of $S:\mathbb{R}^{n}\rightrightarrows\mathbb{R}^{m}$
at $(\bar{x},\bar{u})$. One may refer to standard references \cite{KK02,Mor06,RW98}
for more on metric regularity and its relationship with the Aubin
property. The equivalence between pseudo strict $H$-differentiability
and generalized metric regularity is discussed in \cite[Section 7]{set_diff}. 

Finally, we discuss how Lemma \ref{lem:conv-coderv-gen-derv} can
be used to find the convexified limiting coderivative of a certain
limit of set-valued maps $S_{i}:\mathbb{R}^{n}\rightrightarrows\mathbb{R}^{m}$.
The following result arose in \cite{diff_inc} from trying to calculate
the coderivative of the reachable map in differential inclusions,
where the reachable map can be approximated from a sequence of discretized
reachable maps. This result is of independent interest in the study
of set-valued maps. 
\begin{thm}
\cite{diff_inc}(Convexified limiting coderivative of limits of set-valued
maps) Let $S:\mathbb{R}^{n}\rightrightarrows\mathbb{R}^{m}$ be a
closed set-valued map. Suppose $\{S_{i}(\cdot)\}_{i=1}^{\infty}$,
where $S_{i}:\mathbb{R}^{n}\rightrightarrows\mathbb{R}^{m}$, are
osc set-valued maps such that for any $\epsilon>0$ and $x\in\mathbb{R}^{n}$,
there is some $I$ such that 
\begin{equation}
\mathbf{d}(S(x),S_{i}(x))<\epsilon\mbox{ for all }i>I,\label{eq:Hausdorff-bdd}
\end{equation}
where $\mathbf{d}(\cdot,\cdot)$ denotes the Pompieu-Hausdorff distance
between two closed compact sets. Then we have 
\[
\cl\,\co D^{*}S(\bar{x}\mid\bar{y})\subset\bigcap_{{\delta>0\atop N\in\mathbb{N}}}\cl\,\co\bigcup_{i>N}\bigcup_{{x\in\mathbb{B}_{\delta}(\bar{x})\atop y\in\mathbb{B}_{\delta}(\bar{y})\cap S_{i}(x)}}D^{*}S_{i}(x\mid y).
\]

\end{thm}
The above result is proved by making use of Lemma \ref{lem:conv-coderv-gen-derv}
and showing that for all $\delta>0$ and $N\in\mathbb{N}$, we have
$\mathcal{H}(D^{*}S(\bar{x}\mid\bar{y}))\supset\mathcal{H}\left(\bigcup_{i>N}\bigcup_{{x\in\mathbb{B}_{\delta}(\bar{x})\atop y\in\mathbb{B}_{\delta}(\bar{y})\in S_{i}(x)}}D^{*}S_{i}(x\mid y)\right)$.
We refer to \cite{diff_inc} for more details.

The convexified limiting coderivative is less precise than the limiting
coderivative. But for the problem of estimating the \emph{Clarke subdifferential
}$\cl\,\co\partial f(x)$ of $f$ at $x$, where the \emph{marginal
function }$f$ is defined by $f(x):=\min_{y\in S(x)}\varphi(x,y)$,
it turns out that using $\cl\,\co D^{*}S(x\mid y)$ to estimate $\cl\,\co\partial f(x)$
is not any less precise than using $D^{*}S(x\mid y)$. Once again,
we refer to \cite{diff_inc} for more details.

\section{Acknowledgements}

I thank Alexander Ioffe, Adrian Lewis, Dmitriy Drusvyatskiy and ShanShan
Zhang for conversations that prompted the addition of Section \ref{sec:Appl},
and also to Alexander Ioffe for probing how the results here can be
stated in terms of fans, which simplified some of the statements in
this paper. I thank the anonymous referees for their comments and
suggestions which have helped improve the paper, especially the referee
who read the paper very carefully and pointed out many errors in the
previous version.

\bibliographystyle{amsplain}
\bibliography{../refs}

\end{document}